\newtheorem*{thm*}{Theorem}
\newtheorem{thm}{Theorem}
\newtheorem{lem}[thm]{Lemma}
\newtheorem{cor}[thm]{Corollary}
\newtheorem{prop}[thm]{Proposition}
\theoremstyle{definition}
\newtheorem{defi}[thm]{Definition}
\theoremstyle{remark}
\newtheorem{rem}[thm]{Remark}
\numberwithin{equation}{section}
\numberwithin{thm}{section}
\newcommand{\R}{\mathbb{R}}
\newcommand{\Rn}{\mathbb{R}^n}
\newcommand{\Rnk}{\mathbb{R}^{n+k}}
\newcommand{\Rnu}{\mathbb{R}^{n+1}}
\newcommand{\Hnk}{\mathbb{H}^{n+k+1}}
\newcommand{\N}{\mathbb{N}}
\newcommand{\cS}{\mathcal{S}}
\newcommand{\cC}{\mathcal{C}}
\newcommand{\cA}{\mathcal{A}}
\newcommand{\cB}{\mathcal{B}}
\newcommand{\Dp}{{\mathcal{D}'}}
\newcommand{\Vo}{V_\Omega}
\newcommand{\Voe}{V_{\Omega,\varepsilon}}
\newcommand{\bV}{\bar{V}}
\newcommand{\bVe}{\bar{V}_\varepsilon}
\newcommand{\bD}{\bar{D}}
\newcommand{\bphi}{\bar{\varphi}}
\newcommand{\ex}{\mathrm{e}}
\newcommand{\Vol}{\textup{Vol}}
\newcommand{\vol}{\textup{vol}}
\newcommand{\supp}{\mathrm{Spt}}
\newcommand{\hess}{D^2}
\newcommand{\bhess}{\bar{D}^2}
\newcommand{\bnabla}{\bar{\nabla}}
\newcommand{\tr}{\mathrm{tr}}
\renewcommand{\div}{\mathrm{div}}
\newcommand{\dd}[2]{\frac{\partial #1}{\partial #2}}
\begin{document}

\begin{center}
    \Large
    \textbf{Submanifolds, Isoperimetric Inequalities and Optimal Transportation}\\
    \normalsize
    \vspace{10mm}
    Philippe CASTILLON \\
    \small
    \textsc{i3m} (\textsc{umr cnrs} 5149), Univ. Montpellier II,
    34095 \textsc{Montpellier} Cedex 5, France \\
    \texttt{cast\symbol{64}math.univ-montp2.fr}
    \normalsize
\end{center}

  \vspace{10mm}

  \small
  \noindent\textbf{Abstract :} The aim of this paper is to prove isoperimetric
  inequalities on submanifolds of the Euclidean space using mass transportation
  methods. We obtain a sharp ``weighted isoperimetric inequality'' and a nonsharp
  classical inequality similar to the one obtained in \cite{Michael-Simon}.

  The proof relies on the description of a solution of the problem of
  Monge when the initial measure is supported in a submanifold and the
  final one supported in a linear subspace of the same dimension.

  \vspace{5mm}
  \noindent\textbf{R\'esum\'e :} Le but de cet article est de d\'emonter des in\'egalit\'es
  isop\'erim\'etriques sur les sous-vari\'et\'es de l'espace euclidien en utilisant des
  m\'ethodes de transport optimal de mesures. On obtient ainsi une ``in\'egalit\'e
  isop\'erim\'etrique \`a poids'' avec constante optimale et une in\'egalit\'e classique
  similaire \`a celle obtenue dans \cite{Michael-Simon}.

  La preuve repose sur la description d'une solution du probl\`eme de Monge entre
  une mesure initiale support\'ee par une sous-vari\'et\'e et une mesure finale
  support\'ee par un sous-espace de m\^eme dimension.

  \vspace{5mm}
  \noindent\textbf{Mathematics Subject Classifications (2000) :} 53C42, 51M16.

  \vspace{5mm}
  \noindent\textbf{Key words :} submanifold, isoperimetric inequality, Sobolev inequality,
  	optimal transportation.

  \vspace{10mm}
  \normalsize

%
%
%
%
\section*{Introduction}

The classical isoperimetric inequality of the Euclidean space states that,
for any regular domain $\Omega\subset\Rn$,
$$
n\omega_n^\frac{1}{n}\Vol(\Omega)^\frac{n-1}{n} \le \vol(\partial\Omega)
$$
with equality if and only if $\Omega$ is a ball ($\omega_n$ being the volume
of the unit ball). This inequality admits a lot of generalisation to other
geometries (cf. \cite{Osserman} for a classical survey, and \cite{Ros} for a
more recent one), and on the other hand, a natural question
is to find geometries that share the Euclidean isoperimetric inequality.
One of the class of riemanniann manifolds expected to satisfy this inequality
is the class of minimal submanifolds in Euclidean spaces, and more
generaly in Cartan-Hadamard manifolds.

In this setting, the existence of a positive
isoperimetric constant was proved by J.H. Michael and L.M. Simon in the more
general setting of arbitrary submanifolds (cf. \cite{Michael-Simon}) :
\itshape
there exist a positive constant $C_n$, depending only on $n$, such that
for any domain $\Omega$ in a $n$ dimensional submanifold of $\Rnk$
$$
C_n\Vol(\Omega)^\frac{n-1}{n} \le \vol(\partial\Omega)+n\int_\Omega|H|dv_M
$$
where $H$ is the mean curvature vector of $M$.
\upshape

This result was then extended to submanifolds in Cartan-Hadamard manifolds (cf.
\cite{Hoffman-Spruck} and \cite{Castillon}), but the question of the optimal
constant for this inequality is still an open problem, even for minimal
surfaces in $\R^3$ (cf. \cite{Choe1}, \cite{Choe-Gulliver1}, \cite{Choe-Gulliver2}
for partial results, and \cite{Choe2} for a survey on this question).

A way to prove the Euclidean isoperimetric inequality is to construct a map,
with fine geometric properties, which push forward the uniform measure on
$\Omega$ to the uniform measure on the unit ball : this has to be seen as a
way to compare the domain $\Omega$ to the model domain satisfying the equality case.
This approach was first used by M. Gromov using a map constructed by Knothe (cf.
for example \cite{Chavel} for the proof), and in the sequel we shall refer to such a mapping
 as a ``Knothe map''.

More recently, D. Cordero-Erausquin, B. Nazaret and C. Villani observed that
the solution of an optimal transportation problem between the two
measures could be used as a ``Knothe map'' (cf. \cite{Cordero-Nazaret-Villani}) :
a theorem by Y. Brenier states that, if $\mu$ is a probability measure on $\Rn$
that do not give mass to small sets (ie. sets with Hausdorff dimension less than
or equal to $n-1$) then, for any probability measure $\nu$, there exists a convex
function whose gradient push forward $\mu$ on $\nu$. This approach was also
used in \cite{Figalli-Ge} to get isoperimetric type inequalities in space form.

In the case of an $n$ dimensional submanifold of $\Rnk$ we would like to compare the
uniform measure on $\Omega$ with the model measure which is the uniform one on
the unit ball of $n$ dimensional subspace of $\Rnk$ ; however, we are precisely
in the case where Brenier's theorem does not hold as the first measure is supported
in a small set. The goal of this paper is to deal with the two
following questions : considering two measure in $\Rnk$ supported in
submanifold and in a linear subspace of the same dimension, what are the solutions
of the optimal transportation problem ? Do these solutions have fine geometric
properties to give isoperimetric inequalities on the submanifold ?

In the first section we recall the main results which will be used in the remainder
of the paper : the equivalence between isoperimetric and Sobolev inequalities,
existence and properties of the solution of the optimal transportation problem
in Euclidean space, and differentiability properties of convex functions.

In the second section we describe solutions of the mass transportation problem
between a measure supported in a submanifold and a measure supported in a linear
subspace. It is shown in particular that orthogonal projections play a natural
role in this problem.

The third section is devoted to the proof of the main theorem : using the optimal
map we can compare the uniform measure on a domain in a submanifold with
the model measure. We get the following sharp ``weighted isoperimetric inequality''
(cf. theorem \ref{thm-sobL1}) :
\begin{thm*}
  Let $i:M^n\to\Rnk$ be an isometric immersion, and let $E$ be a $n$-dimensional
  linear subspace of $\Rnk$. For any regular domain $\Omega\subset M$ we have
  $$
  n\omega_n^\frac{1}{n}\Bigl(\int_\Omega J_E^\frac{1}{n-1}dv_M\Bigr)^\frac{n-1}{n}
   \le \vol(\partial\Omega) + n\int_\Omega|H|dv_M,
  $$
  where $H$ is the mean curvature vector of the immersion, and $J_E$ is the
  absolute value of the Jacobian determinant of the orthogonal projection
  from $M$ to $E$.
  This inequality is sharp, as we have equality when $\Omega$ is a
  geodesic ball in $E$.

  The Sobolev counterpart of this inequality is
  $$
  n\omega_n^\frac{1}{n}\Bigl(\int_MJ_E^\frac{1}{n-1}|u|^\frac{n}{n-1}dv_M\Bigr)^\frac{n-1}{n}
   \le \int_M|\nabla u|dv_M + n\int_M|H||u|dv_M
  $$
  for any function $u\in C_c^\infty(M)$.
\end{thm*}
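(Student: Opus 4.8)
The plan is to realize the optimal transportation map from the normalized uniform measure on $\Omega$ to the normalized uniform measure on a geodesic ball $B_R\subset E$ and to track how the curvature of $M$ enters when we differentiate the transport equation along $M$. Concretely, let $\pi:\Rnk\to E$ be the orthogonal projection and write $\mu=\frac{1}{\Vol(\Omega)}\mathbf{1}_\Omega\,dv_M$ pushed forward to $E$ by $\pi$; since $\pi|_M$ has Jacobian $J_E$, the push-forward $(\pi)_*\mu$ is absolutely continuous on $E$ with density proportional to a sum over the (a.e. finitely many) sheets of $\Omega$ above each point, and in particular it gives no mass to $(n-1)$-dimensional sets. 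By Brenier's theorem (recalled in Section 1) there is a convex function $\psi$ on $E$ with $\nabla\psi$ pushing $(\pi)_*\mu$ onto the uniform measure on $B_R$, where $R$ is chosen so that the total masses match: $\omega_n R^n=\Vol(\Omega)$. The map we use on $\Omega$ is then $T=\nabla\psi\circ\pi:\Omega\to B_R$. The Monge--Amp\`ere equation, valid $dv_M$-a.e. on $\Omega$ by the differentiability results for convex functions from Section 1, reads
$$
\det\bigl(\hess\psi(\pi(x))\bigr)\,J_E(x)=\frac{\omega_n R^n}{\Vol(\Omega)}=1 ,
$$
so that $\det\hess\psi=J_E^{-1}$ along $\Omega$, where $\hess\psi$ is the (a.e.\ defined, positive semidefinite) Alexandrov Hessian.

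The second step is the arithmetic--geometric mean inequality applied to the tangential divergence of $T$. Fix $x\in\Omega$ and an orthonormal basis $e_1,\dots,e_n$ of $T_xM$; the tangential divergence $\div_M T=\sum_i\langle\nabla_{e_i}T,e_i\rangle$ can be computed from $T=\nabla\psi\circ\pi$ as the trace of $P\circ\hess\psi(\pi(x))\circ P$ where $P=d\pi|_{T_xM}$ is the (linear) projection $T_xM\to E$. Because $\hess\psi$ is positive semidefinite and $P$ has "singular values" whose product is $J_E(x)$, we get $\div_M T\ge n\,(\det\hess\psi\cdot J_E)^{1/n}\cdot(\text{correction})$; the clean statement is
$$
\div_M T(x)\ \ge\ n\,\bigl(\det\hess\psi(\pi(x))\bigr)^{1/n} J_E(x)^{1/n}\ =\ n\,J_E(x)^{1/n}\,J_E(x)^{-1/n}\cdot J_E(x)^{1/n}
$$
— more precisely, one shows $\div_M T\ge n\,J_E^{1/(n-1)}\cdot\det(\hess\psi)^{?}$; the point is that combining $\det\hess\psi=J_E^{-1}$ with AM--GM for the composed map yields the pointwise bound $\div_M T\ge n\,J_E^{\frac{1}{n-1}}$. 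Integrating over $\Omega$ and applying the divergence theorem for the immersion $i$ (which produces exactly the mean curvature term: $\int_\Omega\div_M X\,dv_M=\int_{\partial\Omega}\langle X,\nu\rangle-\int_\Omega\langle X,nH\rangle\,dv_M$ for an ambient vector field $X$), together with $|T|\le R$ on $\Omega$ and $|\nu|=1$, gives
$$
n\int_\Omega J_E^{\frac{1}{n-1}}dv_M\ \le\ \int_\Omega\div_M T\,dv_M\ \le\ R\,\vol(\partial\Omega)+nR\int_\Omega|H|\,dv_M .
$$
Since $R=(\Vol(\Omega)/\omega_n)^{1/n}$, substituting and rearranging yields the claimed weighted inequality with constant $n\omega_n^{1/n}$; equality when $\Omega=B_R\subset E$ follows because then $\pi$ is the identity, $J_E\equiv1$, $\psi$ is quadratic and $H\equiv0$.

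The main obstacle — and the reason Section 2 is needed — is that $\mu$ is supported in the small set $M$, so Brenier's theorem does not apply to $\mu$ directly; one must transport the \emph{projected} measure $(\pi)_*\mu$ on $E$ and argue that the optimal map between the original measures is genuinely of the form $\nabla\psi\circ\pi$, i.e.\ that orthogonal projection is optimal for the first leg. This is exactly the content of the structural results of Section 2, which I would invoke. A secondary technical point is justifying the change of variables and the Monge--Amp\`ere equation $dv_M$-a.e. on $\Omega$ when $\psi$ is only convex (not $C^2$): this is handled by the Alexandrov differentiability theorem and the absolute continuity of the transport, both recalled in Section 1, and by noting that $J_E>0$ a.e.\ on $\Omega$ (the projection $\pi|_M$ is an immersion away from a measure-zero set, or one restricts to where $J_E>0$ and controls the complement). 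For the Sobolev form, I would run the same argument with $d\mu=\frac{1}{\|u\|}\,|u|^{n/(n-1)}dv_M$ in place of $\mathbf{1}_\Omega\,dv_M$, replacing the divergence theorem boundary term by $\int_M\langle\nabla|u|^{n/(n-1)},T\rangle$ and using $|T|\le R$ together with H\"older to recover $\int_M|\nabla u|$; alternatively, the Sobolev inequality follows from the isoperimetric one by the coarea equivalence recalled in Section 1.
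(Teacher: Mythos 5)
Your overall scheme is the right one and matches the paper's: push the source measure forward to $E$ by the orthogonal projection, apply Brenier's theorem in $E$, restrict the Hessian of the convex potential to $T_xM$, use AM--GM, and integrate by parts on $M$ to produce the mean-curvature term. The compactness/optimality structure from Section~2 and the handling of the Alexandrov Hessian are also invoked in the right places. However, there is a genuine gap, and it is precisely the place where you wrote a question mark.

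The gap is the choice of source measure. With $\mu=\frac{1}{\Vol(\Omega)}\mathbf{1}_\Omega\,dv_M$ and $\omega_nR^n=\Vol(\Omega)$, the Monge--Amp\`ere relation gives (as an inequality, because of the possibly multi-sheeted projection) $\det\hess_A\psi(\pi(x))\ge J_E(x)^{-1}$, and since $\det\bigl(\bhess_A\bar\psi|_{T_xM}\bigr)=J_E(x)^2\det\hess_A\psi(\pi(x))$, AM--GM yields $\tr\bigl(\bhess_A\bar\psi|_{T_xM}\bigr)\ge n\,J_E^{1/n}$ — not $n\,J_E^{1/(n-1)}$. There is no way to upgrade the exponent from $1/n$ to $1/(n-1)$ within your setup; and even granting your claimed pointwise bound, the rearrangement $\frac{n}{R}\int_\Omega J_E^{1/(n-1)}\le\vol(\partial\Omega)+n\int_\Omega|H|$ with $R=(\Vol(\Omega)/\omega_n)^{1/n}$ produces $n\omega_n^{1/n}\Vol(\Omega)^{-1/n}\int_\Omega J_E^{1/(n-1)}$, which is weaker than $n\omega_n^{1/n}\bigl(\int_\Omega J_E^{1/(n-1)}\bigr)^{(n-1)/n}$ because $J_E\le1$. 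The device that makes the exponents come out exactly is to take a \emph{weighted} source measure $\mu=f\,dv_M$ with $f=J_E^{1/(n-1)}u^{n/(n-1)}/c_E(u)$, transported onto the normalized unit ball $\frac{\chi_{B_E}}{\omega_n}$. Then $\det\bigl(\bhess_A\bV|_{T_xM}\bigr)\ge\omega_nJ_E f$, AM--GM gives $n\omega_n^{1/n}J_E^{1/n}f^{1/n}$, and since $J_E^{1/n}f^{1/n}u=c_E(u)^{-1/n}J_E^{1/(n-1)}u^{n/(n-1)}$ (the exponents $\tfrac1n+\tfrac1{n(n-1)}=\tfrac1{n-1}$ combine exactly), integrating and dividing by $c_E(u)^{-1/n}$ produces the $(n-1)/n$ power and the sharp constant $n\omega_n^{1/n}$. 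This self-consistent normalization, and the fact that $f$ vanishes on the critical set $\cC$ so that $p_\#\mu\ll\lambda_E$, are the essential ideas missing from your argument.

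A second, smaller gap: you invoke the divergence theorem with $T=\bnabla\bV$ directly, but $V$ is only convex, so $\tr(\bhess_A\bV|_{T_xM})$ is merely the absolutely continuous density of the distributional Laplacian of $V_M$. Replacing it by the full distribution in the integration by parts is legitimate only because the singular part is a \emph{nonnegative} Radon measure (convexity). This is what Proposition~1.3 of the paper establishes, and your appeal to Alexandrov differentiability alone does not justify that step; you should state explicitly that the singular part has the right sign. Finally, note that your Monge--Amp\`ere displayed relation should be the inequality $\det\hess\psi(\pi(x))J_E(x)\ge1$ rather than an equality, again because $F$ is a sum over preimages of $\pi$; fortunately this inequality points in the right direction, but as written the display is false.
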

We also obtain in this section a classical isoperimetric inequality (ie. of the form
$C\Vol(\Omega)^\frac{n-1}{n} \le \vol(\partial\Omega)+n\int_\Omega|H|dv_M$) with a constant
which is not sharp but improve by far the constants given in \cite{Michael-Simon}
and \cite{Hoffman-Spruck} (cf. theorem \ref{cor-isop} and the remark thereafter).

The fourth section is devoted to the study of certain warp product on which our
method still apply and gives weighted Sobolev inequalities.

%
%
%
%
\section{Preliminaries}

\subsubsection*{Isoperimetric and Sobolev inequalities}
It is a well known fact (due to Federer and Fleming, cf. for example \cite{Chavel}
for a proof) that, on Riemanniann manifolds, the isoperimetric
inequality is equivalent to the $L^1$ Sobolev inequality :
$C\Vol(\Omega)^\frac{n-1}{n} \le \vol(\partial\Omega)$ for any domain
$\Omega\subset M$ if and only if
$C(\int_M|u|^\frac{n}{n-1})^\frac{n-1}{n}\le\int_M|\nabla u|$ for any
$u\in C_c^\infty(M)$ (with the same constant in both inequalities).
This equivalence still holds true for the (weighted) isoperimetric
inequalities with the extra curvature term we are considering in this paper.

In the sequel, we shall prove the Sobolev statement of the inequalities. By density
of the smooth functions, the Sobolev inequality still holds for functions in Sobolev
spaces, and since $|\nabla u|=|\nabla|u||$ almost everywhere, it is sufficient
to consider nonnegative smooth functions.

As was observed in \cite{Cordero-Nazaret-Villani}, the $L^p$ Sobolev
inequalities on $\Rn$ can also be obtained using mass transportation method.
In fact, they obtain a nice duality principle, and if
$$
S_{n,p} = \inf\Bigl\{\frac{\|\nabla u\|_p}{\|u\|_\frac{np}{n-p}}\ \Bigl|\ u\in C_c^\infty(\Rn)\Bigr\}
$$
is the $L^p$ Sobolev constant of $\R^n$, then $S_{n,p}$ can also be
obtained as the following supremum over smooth functions
(cf. \cite{Cordero-Nazaret-Villani} theorem 2) :
$$
S_{n,p} = \frac{n(n-p)}{p(n-1)}
  \sup\left\{\frac{\int|v|^\frac{p(n-1)}{n-p}}%
  {\bigl(\int|y|^\frac{p}{p-1}|v(y)|^\frac{np}{n-p}dy\bigr)^\frac{p-1}{p}}
  \ \Bigl|\ v\in C_c^\infty(\Rn), \|v\|_\frac{np}{n-p}=1\right\}.
$$
As our method to
get the Sobolev inequalities is derived from the one used in
\cite{Cordero-Nazaret-Villani}, this caracterisation of $S_{n,p}$
will appear naturally.

\subsubsection*{Mass transportation problems}
Consider two Polish spaces $X_1$ and $X_2$, and a ``cost function''
$c:X_1\times X_2\to\R$. Given two probability measures $\mu$ and $\nu$
on $X_1$ and $X_2$ respectivally, the cost of a map $T:X_1\to X_2$ which
push forward $\mu$ on $\nu$ is $J(T)=\int_{X_1}c(x,Tx)d\mu$.
The problem of Monge consists in finding a map  whose cost is the infimum
of the costs of all maps pushing forward $\mu$ on $\nu$.

The problem of Monge may have no solution, and it is usefull to consider a
relaxed form : the Monge-Kantorovich problem. We now consider tranference
plans between $\mu$ and $\nu$, that is probability measures $\rho$ on
on $X_1\times X_2$ whose marginals are $\pi^1_\#\rho=\mu$ and
$\pi^2_\#\rho=\nu$ (where $\pi^i$ is the projection on $X_i$). The cost
of a transference plan $\rho$ is $J(\rho)=\int_{X_1\times X_2}c(x_1,x_2)d\rho(x_1,x_2)$,
and an optimal transference plan (ie. a solution of Monge-Kantorovich problem)
is a transference plan whose cost is the infimum of the costs of all transference plan between
$\mu$ and $\nu$.

In particular, if a map $T:X_1\to X_2$
push forward $\mu$ on $\nu$, then it gives rise to a transference plan
$\rho=(Id\times T)_\#\mu$ whose support in $X_1\times X_2$ is
$\supp(\rho)=\{(x,Tx)\ |\ x\in\supp(\mu)\}$ ; if an optimal transference plan
is of this form, then the map $T$ is a solution of the problem of Monge.

The properties of optimal maps and transference plans depends on the properties
of the Polish spaces $X_1$ and $X_2$ and on the cost functions ; the main
reference on this subject is \cite{Villani1}. In the sequel we shall work with the
``quadratic cost'' : $X=Y$ and $c(x,y)=d(x,y)^2$ where $d$ is the distance on $X$.
The main result we shall use on optimal transportation is the following theorem
due to Y.~Brenier (cf. \cite{Villani1} for a proof) :
\begin{thm}
  If $\mu$ and $\nu$ are probability measures on $\Rn$ which do not
  charge small sets (ie. sets with Hausdorff dimension less than or equal
  to $n-1$), then there exist a unique optimal transference plan $\rho$ between
  $\mu$ and $\nu$.

  Moreover, $\rho=(Id\times T)_\#\mu$, where $T:\supp(\mu)\to\supp(\nu)$ is
  the gradient of a convex function.
  \label{thm-BrenieMcCann}
\end{thm}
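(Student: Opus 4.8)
The plan is to follow the classical three-step argument: produce an optimal plan by the direct method, show that its support is cyclically monotone and hence contained in the subdifferential of a convex function, and then use the differentiability of convex functions to conclude that the plan is carried by a graph. For existence, we may and do assume $\mu$ and $\nu$ have finite second moments (the general case follows by a standard reduction, or by McCann's variant of the argument): the set $\Pi(\mu,\nu)$ of transference plans is nonempty (it contains $\mu\otimes\nu$), convex, and weakly compact, since tightness of $\{\mu,\nu\}$ forces tightness of $\Pi(\mu,\nu)$ by Prokhorov's theorem. As $c(x,y)=|x-y|^2$ is nonnegative and lower semicontinuous, $\rho\mapsto J(\rho)$ is lower semicontinuous on $\Pi(\mu,\nu)$, so an optimal plan $\rho$ exists.

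Next I would show that $\supp(\rho)$ is cyclically monotone. If it were not, there would be finitely many points $(x_1,y_1),\dots,(x_N,y_N)$ in $\supp(\rho)$ and a permutation $\sigma$ with $\sum_i|x_i-y_i|^2>\sum_i|x_i-y_{\sigma(i)}|^2$; localising $\rho$ near these points and rerouting a small amount of mass along the cycle produces a strictly cheaper competitor, contradicting optimality. For the quadratic cost, expanding the squares shows this property is exactly ordinary cyclical monotonicity of the set $\supp(\rho)\subset\Rn\times\Rn$. By Rockafellar's theorem there is then a lower semicontinuous convex function $\varphi:\Rn\to\R\cup\{+\infty\}$, built as a supremum of affine maps indexed by finite chains in $\supp(\rho)$, whose subdifferential graph contains $\supp(\rho)$.

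Now I would invoke the differentiability properties of convex functions recalled in the first section: a convex function is differentiable outside a set of Hausdorff dimension at most $n-1$, and at a point of differentiability its subdifferential is the singleton $\{\nabla\varphi(x)\}$. Since $\mu$ does not charge small sets, $\nabla\varphi$ is defined $\mu$-almost everywhere, so $\rho$ is concentrated on the graph of $T:=\nabla\varphi$; this gives $\rho=(Id\times T)_\#\mu$, and $\pi^2_\#\rho=\nu$ forces $T$ to take values in $\supp(\nu)$ for $\mu$-almost every point. For uniqueness, if $\rho_0$ and $\rho_1$ are both optimal then, $J$ being linear and $\Pi(\mu,\nu)$ convex, $\rho_{1/2}=\frac12(\rho_0+\rho_1)$ is optimal as well; by the above it is carried by the graph of a single $\nabla\varphi$, hence so are $\rho_0$ and $\rho_1$ since their supports lie in $\supp(\rho_{1/2})$. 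Two probability measures carried by a graph over the first factor and having the same first marginal $\mu$ must coincide, so $\rho_0=\rho_1$.

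The hard part is the structural step: from the sole fact that $\supp(\rho)$ is cyclically monotone one must manufacture the convex potential $\varphi$ (Rockafellar's construction), and then control the size of its non-differentiability set sharply enough — Hausdorff dimension at most $n-1$ — so that the hypothesis on $\mu$ genuinely guarantees that $\rho$ lives on a graph. By comparison the existence and competitor arguments are soft, and uniqueness is a formal consequence once the graph structure is in hand.
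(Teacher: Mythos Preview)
Your sketch is a correct outline of the standard Brenier--McCann argument: direct method for existence, cyclical monotonicity of the optimal support, Rockafellar's construction of a convex potential, almost-everywhere differentiability of convex functions to collapse the subdifferential onto a graph, and the midpoint-convexity trick for uniqueness. There is nothing substantively wrong with it as a proof plan.

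However, there is no comparison to make: the paper does not prove this theorem. It is quoted in the preliminaries as a known result due to Brenier, with the proof deferred to \cite{Villani1}. So the ``paper's own proof'' you were asked to compare against is simply a citation. Your proposal is in fact the classical route one finds in that reference, so in that indirect sense you are aligned with what the paper invokes.
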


The optimality of a transference plan is related to the c-cyclical monotonicity
of its support (cf. \cite{Villani1}). It is not true in general that a transference plan is optimal
if and only if its support is c-cyclically monotone, but in our setting, as the
cost function is continuous, we have the following criterion (cf. \cite{Pratelli}
theorem~B):
\begin{thm}
  A transference plan $\rho\in P(X\times Y)$ is optimal if and only if for
  all finite family $(x_1,y_1),\dots,(x_n,y_n)$ of points of $\supp(\rho)$ and
  for any permutation $s\in\cS_n$ we have
  $$
  \sum_{i=1}^nd^2(x_i,y_i) \le \sum_{i=1}^nd^2(x_i,y_{s(i)}).
  $$
  \label{thm-criterion}
\end{thm}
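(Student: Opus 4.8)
The plan is to prove the two implications separately; write $\mu,\nu$ for the first and second marginals of $\rho$ and keep $c(x,y)=d^2(x,y)$ throughout. The direction ``$\rho$ optimal $\Rightarrow$ $\supp(\rho)$ is $c$-cyclically monotone'' is a soft perturbation argument and will be the easy half. The converse is the substantive part: from $c$-cyclical monotonicity of $\supp(\rho)$ I would build a pair of potentials realising the Kantorovich dual and conclude by weak duality. I expect the only real difficulty to be measure-theoretic — measurability and, above all, integrability of the potentials on a non-compact Polish space — which is precisely why continuity of the cost is needed. Throughout I assume (as is implicit) that the optimal cost is finite; this is automatic when $\mu,\nu$ have compact support, the only case used later in the paper.

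\emph{Easy direction.} Suppose $\rho$ is optimal but $\supp(\rho)$ is not $c$-cyclically monotone: there are $(x_1,y_1),\dots,(x_m,y_m)\in\supp(\rho)$ and $s\in\cS_m$ with $\delta:=\sum_i d^2(x_i,y_i)-\sum_i d^2(x_i,y_{s(i)})>0$. By continuity of $d^2$ pick open sets $U_i\ni x_i$, $V_i\ni y_i$ on which $d^2$ oscillates by less than $\delta/(4m)$, both on $U_i\times V_i$ and on $U_i\times V_{s(i)}$; since $(x_i,y_i)\in\supp(\rho)$, $p_i:=\rho(U_i\times V_i)>0$. Set $\lambda_i:=p_i^{-1}\rho|_{U_i\times V_i}$, with marginals $\lambda_i^X,\lambda_i^Y$, and $\sigma_i:=\lambda_i^X\otimes\lambda_{s(i)}^Y$. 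For $\varepsilon>0$ small, $\tilde\rho:=\rho-\varepsilon\sum_i\lambda_i+\varepsilon\sum_i\sigma_i$ is a nonnegative probability measure whose marginals are still $\mu$ and $\nu$ (the two corrections have equal marginals after reindexing by $s$), hence a transference plan; and the oscillation bounds give $J(\tilde\rho)-J(\rho)=\varepsilon\sum_i\bigl(\int c\,d\sigma_i-\int c\,d\lambda_i\bigr)\le\varepsilon\bigl(-\delta+\tfrac{\delta}{2}\bigr)<0$, contradicting optimality.

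\emph{Hard direction.} Assume $\Gamma:=\supp(\rho)$ is $c$-cyclically monotone, fix $(x_0,y_0)\in\Gamma$, and define, for $x\in X$,
$$
\varphi(x):=\inf\Bigl\{\,\sum_{i=1}^{\ell}\bigl(d^2(x_i,y_{i-1})-d^2(x_{i-1},y_{i-1})\bigr)\ \Bigm|\ \ell\ge1,\ (x_1,y_1),\dots,(x_{\ell-1},y_{\ell-1})\in\Gamma,\ x_\ell=x\,\Bigr\}.
$$
Taking $\ell=1$ gives $\varphi(x)\le d^2(x,y_0)-d^2(x_0,y_0)<\infty$, while $c$-cyclical monotonicity forbids $\varphi(x)=-\infty$ when $x$ lies in the projection of $\Gamma$ onto $X$ (append a pair $(x,y)\in\Gamma$ to a chain and close it back to $(x_0,y_0)$: the monotonicity inequality for that finite configuration bounds the chain sum from below). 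Since $\mu$ is concentrated on that projection, $\varphi$ is finite $\mu$-a.e.; being a countable infimum of functions continuous in $x$, it is upper semicontinuous, hence Borel. Put $\psi(y):=\inf_{x\in X}\bigl(d^2(x,y)-\varphi(x)\bigr)$, which is Borel, finite $\nu$-a.e. by the same mechanism, satisfies $\varphi(x)+\psi(y)\le d^2(x,y)$ everywhere, and $\varphi(x)+\psi(y)=d^2(x,y)$ on $\Gamma$. Then for every transference plan $\tilde\rho$ between $\mu$ and $\nu$,
$$
J(\tilde\rho)=\int d^2\,d\tilde\rho\ \ge\ \int\bigl(\varphi(x)+\psi(y)\bigr)d\tilde\rho\ =\ \int\varphi\,d\mu+\int\psi\,d\nu\ =\ \int\bigl(\varphi(x)+\psi(y)\bigr)d\rho\ =\ \int d^2\,d\rho=J(\rho),
$$
using in the last two equalities that $\rho$ is concentrated on $\Gamma$; hence $\rho$ is optimal.

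\emph{Main obstacle.} The delicate step is the middle of that display: the splittings $\int(\varphi(x)+\psi(y))d\tilde\rho=\int\varphi\,d\mu+\int\psi\,d\nu$ (and its analogue for $\rho$) must not secretly be $\infty-\infty$, so one needs $\varphi^-\in L^1(\mu)$ and $\psi^-\in L^1(\nu)$ (the positive parts being dominated by $d^2(\cdot,y_0)$ and $d^2(x_0,\cdot)$). When $\supp(\mu)$ and $\supp(\nu)$ are compact this is immediate, since $d^2$ is then bounded on $\supp(\mu)\times\supp(\nu)$ and one checks from the equality $\varphi(x)+\psi(y)=d^2(x,y)$ on $\Gamma$ that $\varphi,\psi$ are in fact bounded $\mu$-, resp. $\nu$-a.e. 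In general this integrability is exactly where continuity of the cost is indispensable and cannot be weakened to mere Borel measurability — the equivalence is false for Borel costs, by the Ambrosio--Pratelli counterexamples. To treat the general Polish case I would reduce, by inner regularity, to $\rho$ concentrated on a $\sigma$-compact $c$-cyclically monotone set, establish the integrability of the potentials there, and pass to the limit; securing that integrability is, to my mind, the crux, the rest being the bookkeeping above.
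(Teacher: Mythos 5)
The paper does not prove this statement; it is imported verbatim from Pratelli's Theorem B (reference [Pr]), so there is no in-paper proof to compare against. Given that, let me review your sketch on its own terms.

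Your easy direction is the standard perturbation argument and is essentially correct (with the usual caveat that one should shrink the rectangles $U_i\times V_i$ to be pairwise disjoint before defining $\lambda_i,\sigma_i$, which is possible once the pairs $(x_i,y_i)$ are taken distinct; otherwise the nonnegativity of $\tilde\rho$ for small $\varepsilon$ needs a more careful accounting of overlaps). Your hard direction is the classical Rockafellar--R\"uschendorf route: build $\varphi$ by chaining, set $\psi$ to be the $c$-transform, and conclude by weak duality. You have correctly located the crux --- the splitting $\int(\varphi+\psi)\,d\tilde\rho=\int\varphi\,d\mu+\int\psi\,d\nu$ requires $\varphi^-\in L^1(\mu)$, $\psi^-\in L^1(\nu)$ --- and you correctly observe that with compact supports this is automatic because the potentials are then bounded on the supports. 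Since the paper only ever invokes the theorem for compactly supported measures (push-forwards of $f\,dv_M$ with $f\in C_c^\infty$ onto measures on a ball), your argument fully covers the use made of it.

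However, as a proof of the theorem as stated (arbitrary Polish spaces, quadratic cost, no compactness hypothesis) there is a genuine gap. Your final paragraph --- ``reduce by inner regularity to a $\sigma$-compact $c$-cyclically monotone set, establish integrability there, and pass to the limit'' --- is a description of a hope, not an argument. The difficulty is precisely that the R\"uschendorf potentials need not be integrable without extra hypotheses (finiteness of the optimal cost alone does not suffice to make the chain-infimum construction produce $L^1$ potentials), and approximating the support by compacts does not obviously control the potentials uniformly along the approximation. This is exactly the obstruction that makes the general result nontrivial, and Pratelli's actual proof sidesteps the integrability of dual potentials rather than establishing it; he argues directly from $c$-cyclical monotonicity using approximation and a careful exhaustion, without ever passing through a strong-duality identity. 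So: correct for the compact case, correct identification of the obstacle, but the reduction you outline for the general Polish case is not carried out and is not a minor bookkeeping step --- it is the content of the cited theorem.
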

For more results on the relations between optimality of transference plans
and c-cyclical monotonicity of their supports, cf. \cite{Pratelli}.

\subsubsection*{Restriction of convex functions to submanifolds}
Considering an isometric immersion $i:M^n\to N^{n+k}$, we shall note $\cA_x$ its
second fundamental form at $x$, dans $H_x=\frac{1}{n}\sum\cA_x(e_i,e_i)$ its
mean curvature vector, where the sum is taken over an orthonormal basis
of $T_xM$.

In the sequel we shall note $\nabla$ and $\hess$ (resp. $\bnabla$ and $\bhess$)
the gradient and the Hessian on $M$ (resp. on $N$).

In particular, the second fundamental form appears when writting the Hessian
of the restriction of a function to the submanifold in term of the Hessian
of the function on the ambiant manifold. Let $F:N\to \R$ be a smooth
function and let $f=F_{|_M}$ be its restriction to $M$.
For all $x\in M$ and all $\xi,\eta\in T_xM$ we have
$$
\hess f(x)(\xi,\eta) = \bhess F(x)(\xi,\eta)
  + \langle(\bnabla F)_x,\cA_x(\xi,\eta)\rangle.
$$
As a consequence, we get the Laplacian of $f$ :
\begin{equation}
  \Delta f(x) = \tr(\bhess F(x)_{|_{T_xM}}) + n\langle(\bnabla F)_x,H_x\rangle.
  \label{eqn-laplacian_submfd}
\end{equation}

The solution of the problem of Monge is given by the gradient of a convex function,
however, there is no reason for this function to be smooth ; so we have to get
a formula similar to equation \ref{eqn-laplacian_submfd} for the Laplacian
in the sense of distribution.

Let $\bV:\Rnk\to\R$ be a convex function. It is well known that $\bV$ is locally
Lipschitz, and therefore differentiable almost everywhere. Moreover, its Hessian in
the sense of distribution is a Radon measure, and, almost everywhere,
$\bV$ has second derivative given by the absolutely continuous part of this
measure with respect to Lebesgue measure (cf. for example \cite{Evans-Gariepy}).
This second derivative is known as the Hessian in the sense of Aleksandrov,
and will be noted $\bhess_A\bV$ in the sequel.

Considering an isometric immersion $i:M^n\to\Rnk$ and a convex function $\bV:\Rnk\to\R$,
we shall prove that equation \ref{eqn-laplacian_submfd} holds ``in Aleksandrov sense''.
In fact, we only need to consider the following particular case : let $E\subset\Rnk$ be
a $n$-dimensional linear subspace, let $p$ be the orthogonal projection on $E$, let
$V:E\to\R$ be a convex function, and let $\bV=V\circ p$ ; the function $\bV$ is
convex and invariant in the directions of $E^\bot$.
In this context, we have the following proposition :

\begin{prop}
	Let $V$ and $\bV$ be as above, and suppose that $|\nabla V|\le C$
	on $E$. For any bounded domain $\Omega\subset M$, the restriction
	$\Vo:\Omega\to\R$ of $\bV$ to $\Omega$ has the following properties :
	\begin{enumerate}
		\item $\Vo$ is Lipschitz and $|\nabla\Vo|\le C$ ;
		\item there exists $h\in L^2(\Omega)$ and a nonnegative
			Radon measure $\nu$ such that, in the sense of distribution,
			$\Delta_\Dp\Vo = \nu + h$
			where $h$ and $\nu$ have the following properties :
			\begin{itemize}
				\item for all $\varphi\in C_c^\infty(\Omega)$,
					$|\int_\Omega\varphi h|\le nC\int_\Omega|\varphi||H|$ ;
				\item if $D\subset \Omega$ is a domain such that the
					orthogonal projection $p:D\to E$ is a local diffeomorphism,
					then $h=n\langle H,\bnabla\bV \rangle$
					a.e. in $D$ and the Lebesgue decomposition of
					$\nu$ reads $\nu=gdv_M+\nu_s$ with
					$g(x)=\tr(\bhess_A\bV(x)_{|_{T_xM}})$ for a.a. $x\in D$,
					and $\nu_s$ singular with respect to $dv_M$.
			\end{itemize}
	\end{enumerate}
	\label{prop-cvxe_submfd}
\end{prop}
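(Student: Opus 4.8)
The strategy is to regularize $\bV$ by convolution and pass to the limit. Concretely, let $\etae$ be a standard mollifier on $\Rnk$ and set $\bVe = \bV * \etae$; these are smooth convex functions with $|\bnabla\bVe|\le C$ (convolution does not increase the Lipschitz constant), and $\bVe\to\bV$ locally uniformly while $\bhess\bVe \to \bhess\bV$ weakly as measures, with the absolutely continuous parts converging in $L^1_{loc}$ to $\bhess_A\bV$. Let $\Voe = \bVe{}_{|_\Omega}$ be the restrictions. Statement \emph{i.} is then immediate: each $\Voe$ is smooth with $|\nabla\Voe|\le|\bnabla\bVe|\le C$ along $M$, and the uniform limit $\Vo$ inherits the same Lipschitz bound. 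For statement \emph{ii.}, apply the smooth formula \eqref{eqn-laplacian_submfd} to $f=\Voe$, $F=\bVe$, obtaining
$$
\Delta\Voe(x) = \tr\bigl(\bhess\bVe(x)_{|_{T_xM}}\bigr) + n\langle(\bnabla\bVe)_x, H_x\rangle.
$$
Call the two terms on the right $g_\varepsilon$ and $h_\varepsilon$ respectively.

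For the curvature term, $|h_\varepsilon| = n|\langle\bnabla\bVe,H\rangle|\le nC|H|$ pointwise on $\Omega$, so the $h_\varepsilon$ are uniformly bounded in $L^1$ against any test function weighted by $|H|$; since $\bnabla\bVe\to\bnabla\bV$ a.e. (at every point of differentiability of $\bV$, which is a.e.), dominated convergence gives $h_\varepsilon\to h:=n\langle\bnabla\bV,H\rangle$ in $L^1_{loc}(\Omega)$, and in particular $|\int_\Omega\varphi h|\le nC\int_\Omega|\varphi||H|$ for $\varphi\in C_c^\infty(\Omega)$. This term is in fact better than $L^2$ on bounded domains because $H$ is smooth; one should record $h\in L^2(\Omega)$ by noting $|h|\le nC|H|$ with $|H|$ bounded on the bounded domain $\bar\Omega$. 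For the term $g_\varepsilon$: since $\bVe$ is convex, $\bhess\bVe\ge0$, hence $g_\varepsilon(x)=\tr(\bhess\bVe(x)_{|_{T_xM}})\ge0$, so the measures $g_\varepsilon\,dv_M$ are nonnegative. Their total variation on a fixed bounded $\Omega'\Supset\Omega$ is controlled by $\|\bhess\bVe\|$ on a neighbourhood in $\Rnk$, which is bounded uniformly in $\varepsilon$ (weak convergence of $\bhess\bV$). Hence, up to a subsequence, $g_\varepsilon\,dv_M$ converges weakly-$*$ to a nonnegative Radon measure $\nu$ on $\Omega$. Passing to the limit in the distributional identity $\int_\Omega\Voe\Delta\varphi = \int_\Omega(g_\varepsilon+h_\varepsilon)\varphi$ and using $\Voe\to\Vo$ uniformly yields $\Delta_\Dp\Vo=\nu+h$ in $\Dp(\Omega)$.

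It remains to identify $g$ and $h$ on a domain $D$ where $p:D\to E$ is a local diffeomorphism. There the a.e.\ statement for $h$ is already contained in the definition $h=n\langle H,\bnabla\bV\rangle$. For $\nu$, the point is to show that on $D$ the absolutely continuous part of $\nu$ with respect to $dv_M$ equals $x\mapsto\tr(\bhess_A\bV(x)_{|_{T_xM}})$. Because $\bV=V\circ p$ and $p$ is a diffeomorphism onto an open subset of $E$ over $D$, the pullback to $D$ of the Aleksandrov Hessian of $V$ is well-defined $dv_M$-a.e.\ on $D$, and the approximate second differentials $\bhess\bVe$ restricted to $T_xM$ converge in $L^1(D)$ to it; meanwhile the singular parts of $\bhess\bVe$ contribute only to a measure singular with respect to $dv_M$ in the limit. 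Taking Lebesgue decompositions and passing to the limit gives $\nu=g\,dv_M+\nu_s$ with $g(x)=\tr(\bhess_A\bV(x)_{|_{T_xM}})$ a.e.\ in $D$ and $\nu_s\perp dv_M$.

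\textbf{Main obstacle.} The delicate step is the last one: justifying that, under the local-diffeomorphism hypothesis on $p$, the $dv_M$-absolutely-continuous part of the limit measure $\nu$ is exactly the trace of the Aleksandrov Hessian, i.e.\ that no part of the singular (concentrated) part of $\bhess\bV$ leaks into the absolutely continuous part of $\nu$ under restriction to the $n$-plane field $TM$, and conversely that the a.c.\ part of $\bhess\bV$ is not annihilated by the tangential restriction. This requires using that $p|_D$ is a diffeomorphism to transfer the fine (Aleksandrov) differentiability theory of the convex function $V$ on $E$ to $D$, controlling how Lebesgue-null sets and the coarea/Jacobian factor behave under $p$, and invoking the structure theorem for Hessians of convex functions (Alexandrov's theorem together with the Radon--measure / Lebesgue decomposition as in \cite{Evans-Gariepy}). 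The curvature term $h$ and assertion \emph{i.}\ are comparatively routine.
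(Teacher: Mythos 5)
Your plan follows the paper's architecture (mollify, apply the smooth formula, pass to the limit, then localize to $D$), but two steps are genuinely unresolved.

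\textbf{The definition of $h$.} You define $h = n\langle H,\bnabla\bV\rangle$ on all of $\Omega$, arguing that $\bnabla\bVe\to\bnabla\bV$ a.e. and invoking dominated convergence. The ``a.e.'' here is with respect to Lebesgue measure $\lambda_{n+k}$ on $\Rnk$, but you need it with respect to $dv_M$ on $\Omega$, and $M$ is $\lambda_{n+k}$-null. Since $\bV = V\circ p$ is invariant in the $E^\perp$ directions, its non-differentiability set is the cylinder $p^{-1}(N)$ over a $\lambda_n$-null set $N\subset E$; away from the critical set $\cC$ of $p$ this pulls back to a $dv_M$-null set (coarea), but on $\cC$ it can have full $dv_M$-measure (for instance, a piece of $M$ tangent to $E^\perp$ projecting to a single point of $N$). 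So $\bnabla\bV$ need not exist $dv_M$-a.e.\ on $\Omega$, and your limit function $h$ is not well defined this way. The paper sidesteps this: the functions $\langle H,\bnabla\bVe\rangle$ are uniformly bounded in $L^2(\Omega)$, so it extracts a weak $L^2$ limit $h$ along a subsequence, from which the bound $|\int\varphi h|\le nC\int|\varphi||H|$ follows without pointwise convergence. The identification $h=n\langle H,\bnabla\bV\rangle$ is only claimed, and only proved, on a domain $D$ where $p$ is a local diffeomorphism, precisely because there the pullback $p^{-1}(N)$ is $dv_M$-null and dominated convergence applies. You should adopt the weak-compactness definition of $h$ and defer the pointwise identification to $D$.

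\textbf{The decomposition of $\nu$ on $D$.} You correctly flag this as the delicate step, but the sketch (``the approximate second differentials converge in $L^1(D)$ to the trace of the Aleksandrov Hessian, and the singular parts contribute only to a singular measure in the limit'') is not an argument: $\bhess\bVe$ is smooth, so it has no singular part, and there is no $L^1$ convergence of the a.c.\ parts in general, only weak-$*$ convergence of measures. What the paper actually does is exploit the product structure. On $D$ (taken so that $p|_D$ is a diffeomorphism), set $\bD = D + E^\perp$ and use the diffeomorphism $\Phi(y,z)=y+z$ to write $\lambda_{n+k} = J(y)\,dv_M\,\lambda_k$, where $J>0$ is the Jacobian of $p$. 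Testing the smooth identity against $\bphi(y+z)=\varphi(y)\rho(z)$ with $\int\rho=1$ and letting $\varepsilon\to0$, one lands in the distributional Hessian of $\bV$ on the open set $\bD\subset\Rnk$, where the Evans--Gariepy structure theorem applies. The $E^\perp$-invariance of $\bV$ forces the singular part of $\tr(\bhess_\Dp\bV|_{TD_z})$ to have the product form $\mu_s\otimes\lambda_k$ with $\mu_s$ singular on $E$; pushing forward by $p^{-1}$ (a diffeomorphism) and dividing by $J$ yields a measure on $D$ singular with respect to $dv_M$, while the a.c.\ part gives exactly $\tr(\bhess_A\bV|_{TD})\,dv_M$. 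Without this reduction to the ambient space via the product structure and the invariance of $\bV$, there is no clean way to ``transfer the Aleksandrov theory to $D$'' as you propose, and this is the substantive idea you are missing.

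Part \emph{i.}\ and the weak-$*$ compactness/Riesz step producing the nonnegative Radon measure $\nu$ are fine and match the paper.
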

\begin{proof}
	As $|\nabla V|\le C$, the function $\bV$ is $C$-Lipschitz and for any
	$x$, $y$ in $\Omega$ we have $|\Vo(x)-\Vo(y)|\le C|x-y|\le Cd_M(x,y)$, where $d_M$
	is the distance in $M$. Therefore, $\Vo$ is $C$-Lipschitz on $\Omega$ and, by
	Rademacher's theorem, differentiable almost everywhere with $|\nabla\Vo|\le C$.

	To prove $ii.$ we follow \cite{Evans-Gariepy}. Let
	$V_\varepsilon=\rho_\varepsilon*V$, where $\rho_\varepsilon$ is a mollifier on $E$ ;
	$\bV_\varepsilon=V_\varepsilon\circ p$ is a smooth convex function on $\Rnk$, and we note
	$\Voe$ its restriction to $\Omega$. Moreover, we have
	$\nabla V_\varepsilon=\rho_\varepsilon*\nabla V$ on $E$, and
	$|\bnabla\bV_\varepsilon|\le C$.

	By formula \ref{eqn-laplacian_submfd} and integration by part on $\Omega$
	we have
	\begin{equation}
		\int_\Omega V_{\Omega,\varepsilon}\Delta\varphi
			- n\int_\Omega\varphi\langle H,\bnabla\bV_\varepsilon\rangle
			= \int_\Omega\varphi\tr(\bhess\bV_\varepsilon\,_{|_{TM}})
		\label{eqn-laplacian_mollified}
	\end{equation}
	for all $\varphi\in C_c^\infty(\Omega)$. As $|\bnabla\bV_\varepsilon|\le C$,
	the functions $\langle H,\bnabla\bV_\varepsilon\rangle$ are uniformly bounded
	in $L^2(\Omega)$ and, by weak compacity, there exists $h\in L^2(\Omega)$
	and a sequence $\varepsilon_j\to 0$ such that
	$n\int_M\varphi\langle H,\bnabla\bV_{\varepsilon_j}\rangle\to\int_M\varphi h$
	for all $\varphi\in C_c^\infty(\Omega)$. Moreover, as
	$n|\int_M\varphi\langle H,\bnabla\bV_{\varepsilon_j}\rangle|
	\le nC\int_\Omega|\varphi||H|$ for all $j$, we also have
	$|\int_\Omega\varphi h|\le nC\int_\Omega|\varphi||H|$.

	Since $\bV_\varepsilon$ is convex, passing to the limit in
	equation \ref{eqn-laplacian_mollified} gives
	$$
	\int_\Omega V_\Omega\Delta\varphi - \int_\Omega\varphi h \ge 0,
	$$
	and by Riesz representation theorem, there exist a nonnegative Radon measure $\nu$
	on $\Omega$ such that, for all $\varphi\in C_c^\infty(\Omega)$,
	$$
	\int_\Omega V_\Omega\Delta\varphi - \int_\Omega\varphi h = \int_\Omega\varphi d\nu,
	$$
	which implies that, in the sense of distribution, $\Delta_\Dp V_\Omega = \nu + h$.

	Let $D\subset\Omega$ be a domain such that $p:D\to E$ is a local diffeomorphism~;
	in particular a.a. points of $D$ are Lebesgues points of
	$\bnabla\bV$ and $\bV$ is twice differentiable a.e. in $D$. We have that
	$\bnabla\bV_{\varepsilon_j}\to\bnabla\bV$ a.e. in $D$, and, by the dominated
	convergence theorem, $\int_D\varphi\langle H,\bnabla\bV_{\varepsilon_j}\rangle
	\to \int_D\varphi\langle H,\bnabla\bV\rangle$ for all
	$\varphi\in C_c^\infty(D)$ ; this implies that $h=n\langle H,\bnabla\bV\rangle$
	a.e. in $D$.

	As the last point we want to prove is of local nature,
	we can assume that $p:D\to E$ is
	a diffeomorphism. For any $z\in E^\bot$, let $D_z=\{y+z\ |\ y\in D\}$,
	and note $V_D$ and $V_{D_z}$ the restrictions of $\bV$ to $D$ and $D_z$
	respectivelly. The set $\bD=\{y+z\ |\ y\in D,z\in E^\bot\}$ is open in $\Rnk$.
	Considering the diffeomorphism $\Phi:D\times E^\bot\to \bD$ defined by
	$\Phi(y,z)=y+z$, we can write the Lebesgue measure $\lambda_{n+k}$ on $\bD$
	in term of the Riemannian measure $dv_M$ on $D$ and the Lebesgue measure
	$\lambda_k$ on $E^\bot$ : $\lambda_{n+k}=J(y)dv_M\lambda_k$, where $J$ is
	the absolute value of the Jacobian determinant of $p$ (in particular,
	$J$ is smooth and positive). For any function $F$ on $\bD$ we have
	\begin{equation}
		\int_{\bD} F(x)dx = \int_D\int_{E^\bot}F(y+z)J(y)dv_M(y)dz.
		\label{eqn-integration-y-z}
	\end{equation}

	Considering now the smooth functions $\bVe$, we note $V_{D,\varepsilon}$ its
	restriction to $D$. Using that $\bVe$ is invariant in the directions of $E^\bot$
	we have, for any function $\bphi\in C_c^\infty(\bD)$,
	\begin{eqnarray*}
		\int_{\bD}\tr(\bhess\bVe\,_{|_{T_xD_z}})\frac{\bphi(x)}{J(y)}dx
			& = & \int_D\int_{E^\bot}\tr(\bhess\bVe\,_{|_{T_{y+z}D_z}})\bphi(y+z)dzdv_M(y) \\
		 & = & \int_D\tr(\bhess\bVe\,_{|_{T_{y}D}})\int_{E^\bot}\bphi(y+z)dzdv_M(y) \\
		 & = & \int_D\bigl(\Delta V_{D,\varepsilon}-n\langle H,\bnabla\bVe \rangle\bigr)
		 	\Bigl(\int_{E^\bot}\bphi(y+z)dz\Bigr)dv_M(y),
	\end{eqnarray*}
	where, for $x\in\bD$, $y$ and $z$ are the points in $D$ and $E^\bot$ defined
	by $x=y+z$.
	
	Let $\varphi\in C_c^\infty(D)$, let $\rho\in C_c^\infty(E^\bot)$ be such that
	$\int_{E^\bot}\rho=1$, and let $\bphi$ be defined by $\bphi(y+z)=\varphi(y)\rho(z)$.
	We get
	\begin{eqnarray*}
		\int_{\bD}\tr(\bhess\bVe\,_{|_{T_xD_z}})\frac{\bphi(x)}{J(y)}dx
		 & = & \int_D\bigl(\Delta V_{D,\varepsilon}
		 	- n\langle H,\bnabla\bVe \rangle\bigr)\varphi dv_M \\
		 & = &  \int_D\bigl(V_{D,\varepsilon}\Delta\varphi
		 	- n\varphi\langle H,\bnabla\bVe \rangle\bigr)dv_M,
	\end{eqnarray*}
	and letting $\varepsilon$ tend to $0$ gives
	\begin{equation}
		\int_{\bD}\tr(\bhess_{\Dp}\bV\,_{|_{T_xD_z}})\frac{\bphi(x)}{J(y)}dx =
		\int_D\bigl(V_D\Delta\varphi -n\varphi\langle H,\bnabla\bV \rangle\bigr)dv_M.
		\label{eqn-Hess-distrib}
	\end{equation}
	As $\bV$ is a convex function on $\Rnk$, $\tr(\bhess_{\Dp}\bV\,_{|_{T_xD_z}})$ is
	a Radon measure of the form
	$$
	\tr(\bhess_{\Dp}\bV\,_{|_{T_xD_z}}) = \tr(\bhess_A\bV\,_{|_{T_xD_z}})\lambda_{n+k}
		+ \bar{\mu}_s
	$$
	with $\bar{\mu}_s$ a singular measure. Moreover, the invariance of $\bV$ in the
	directions of $E^\bot$ implies that $\tr(\bhess_A\bV\,_{|_{T_xD_z}})$ is also
	invariant, and $\bar{\mu}_s=\mu_s\otimes\lambda_k$ with $\mu_s$ a singular measure
	on $E$. Finally, using \ref{eqn-integration-y-z}, equality \ref{eqn-Hess-distrib} becomes
	\begin{eqnarray*}
		\int_D\bigl(V_D\Delta\varphi -n\varphi\langle H,\bnabla\bV \rangle\bigr)dv_M & = &
			\int_D\tr((\bhess_A\bV)_{|_{T_yD}})\varphi(y)dv_M(y) \\
			& & + \int_{p(D)}\frac{\varphi(p^{-1}(u))}{J(p^{-1}(u))}d\mu_s(u) \\
		& = & \int_D\tr((\bhess_A\bV)_{|_{T_yD}})\varphi(y)dv_M(y) \\
			& & + \int_D\frac{\varphi}{J}d(p^{-1})_\#\mu_s,
	\end{eqnarray*}
	and we get
	$$
	\Delta_{\Dp}V_D = \bigl(\tr(\bhess_A\bV\,_{|_{TD}}) + n\varphi\langle H,\bnabla\bV \rangle\bigr)dv_M
		+ \frac{1}{J}(p^{-1})_\#\mu_s.
	$$
\end{proof}

\begin{rem}
	Denote by $V_M$ the restriction of $\bV$ to $M$.
	As a consequence of the above proposition, we have that the Laplacian
	of $V_M$ in the sense of distributions is a Radon measure ; in the sequel
	we shall note $\Delta_AV_M$ the density of its regular part in the
	Lebesgue decomposition with respect to $dv_M$.

	In particular, if $D\subset M$ is a bounded domain such that $p:D\to E$
	is a local diffeomorphism, then $\bnabla\bV$ is well defined a.e. on $D$
	and we have
	$$
	\Delta_AV_M = \tr(\bhess_A\bV\,_{|_{TM}}) + n\langle H,\bnabla\bV \rangle.
	$$
	This has to be seen has the generalisation of formula \ref{eqn-laplacian_submfd}
	to nonsmooth convex functions which are invariant in the directions of $E^\bot$.
	\label{rem-laplacian_cvx_submfd}
\end{rem}
%
%
%
%
\section{Optimal transportation and orthogonal projection on a subspace}

\subsubsection*{The general case}

As a direct consequence of theorem \ref{thm-criterion}, we have that projections (if well
defined) are optimal transportations. Consider a Polish space $X$ and  a closed
subset $C\subset X$ on which the projection $p:X\to C$ is well defined~: for
all $x\in X$ the function $d(x,.):C\to\R$ admits a unique minimum, $p(x)$ being,
by definition, the point where this minimum is achieved. For any measure
$\mu\in P(X)$, $\rho = (Id\times p)_\#\mu$ is a transference plan between the
measures $\mu$ and $\nu=p_\#\mu$. Applying theorem \ref{thm-criterion}, it is easy to
see that this transference plan is optimal : consider $(x_1,y_1),\dots,(x_n,y_n)$
in the support of $\rho$, for all $1\le i\le n$ we have $y_i=p(x_i)$ so that
for any permutation $s\in\cS_n$ and any $1\le i\le n$ we get
$d^2(x_i,y_i) \le d^2(x_i,y_{s(i)})$, which implies that $\rho$ is optimal.
A particular case is when $C$ is a linear subspace of $\R^n$, $p$ being the
orthogonal projection on $C$.

In the sequel we consider the product of three Polish spaces $X_i$, $i=1,2,3$,
and we note $\pi^{ij}:X_1\times X_2\times X_3\to X_i\times X_j$ the projection
(ie. $\pi^{ij}(x_1,x_2,x_3)=(x_i,x_j)$).
\begin{defi}{(Gluing of transference plans)}
  Consider three measures $\mu_i\in P(X_i)$, $i=1,2,3$, and two transference
  plans $\rho_{12}\in P(X_1\times X_2)$
  between $\mu_1$ and $\mu_2$, and $\rho_{23}\in P(X_2\times X_3)$ between
  $\mu_2$ and $\mu_3$.

  A gluing of $\rho_{12}$ and $\rho_{23}$ is a probability measure
  $\Gamma\in P(X_1\times X_2\times X_3)$ whose marginals on $X_1\times X_2$
  and $X_2\times X_3$ are $\rho_{12}$ and $\rho_{23}$ respectivally.
  \label{defi-gluing}
\end{defi}

As soon as the second marginal of the first transference plan equals the first
marginal of the second one, gluing of transference plans always exist (cf.
the ``gluing lemma'' in \cite{Villani2}), and they can be seen as a way of composing
transference plans : with the notation of definition \ref{defi-gluing},
we have that $\pi^{13}_\#\Gamma$ is a transference plan between $\mu_1$ and
$\mu_3$.

This is well illustrated by the particular case where $\mu_2=F^1_\#\mu_1$
and $\mu_3=F^2_\#\mu_2$. Consider the transference plans
$\rho_{12}=(Id\times F^1)_\#\mu_1$ and $\rho_{23}=(Id\times F^2)_\#\mu_2$.
Suppose $\Gamma$ is a gluing of $\rho_{12}$ and $\rho_{23}$. For any
$(x_1,x_2,x_3)\in\supp(\Gamma)$, we have $(x_1,x_2)\in\supp(\rho_{12})$ and
$(x_2,x_3)\in\supp(\rho_{23})$, so we get $x_2=F^1(x_1)$ and $x_3=F^2(x_2)$.
From this we can conclude that $\Gamma=(Id\times F^1\times F^2\circ F^1)_\#\mu_1$,
and that $\pi^{13}_\#\Gamma=(Id\times F^2\circ F^1)_\#\mu_1$ which is the transference
plan associated to the map $F^2\circ F^1$ : the gluing of transference plans
extends the composition of maps.

In general, there is no reason for $\pi^{13}_\#\Gamma$ to be optimal, even if
$\rho_{12}$ and $\rho_{23}$ are optimal, however, in the setting of projections
on a linear subspace, we have the following result :

\begin{thm}
  Let $E$ be a linear subspace of $\R^n$, and let $p_E$ denote the orthogonal
  projection on $E$. Consider two probability measures $\mu\in P(\R^n)$ and
  $\nu\in P(E)$, the optimal transference plans $\rho = (Id\times p_E)_\#\mu$
  between $\mu$ and $(p_E)_\#\mu$, and an optimal tansference plan $\sigma$
  between $(p_E)_\#\mu$ and $\nu$.

  If $\supp((p_E)_\#\mu)$ is compact, then, for any gluing $\Gamma$ of $\rho$ and $\sigma$,
  $\pi^{13}_\#\Gamma$ is an optimal transferance plan between $\mu$ and $\nu$.
  \label{thm-optimal_gluing}
\end{thm}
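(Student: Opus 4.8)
The plan is to use the optimality criterion of Theorem~\ref{thm-criterion}: it suffices to show that $\supp(\pi^{13}_\#\Gamma)$ is $c$-cyclically monotone for the quadratic cost, i.e.\ that for every finite family $(x_1,z_1),\dots,(x_N,z_N)$ in $\supp(\pi^{13}_\#\Gamma)$ and every permutation $s\in\cS_N$ one has $\sum_i|x_i-z_i|^2\le\sum_i|x_i-z_{s(i)}|^2$. The point is that a point $(x,z)\in\supp(\pi^{13}_\#\Gamma)$ comes, by the gluing, with an intermediate point $y=p_E(x)\in E$ such that $(x,y)\in\supp(\rho)$ and $(y,z)\in\supp(\sigma)$; compactness of $\supp((p_E)_\#\mu)$ guarantees we can select, for each $i$, such an intermediate $y_i$ (the fibre of $\pi^{13}$ over $(x_i,z_i)$ is nonempty, and since $\rho$ is a graph $y_i=p_E(x_i)$ is in fact forced).

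Next I would split the cost. Write $x_i-z_i=(x_i-y_i)+(y_i-z_i)$ with $x_i-y_i\in E^\bot$ and $y_i-z_i\in E$ (since $y_i=p_E(x_i)$ and $z_i,y_i\in E$), so by orthogonality
\begin{equation*}
  |x_i-z_i|^2=|x_i-y_i|^2+|y_i-z_i|^2.
\end{equation*}
For a mismatched pair, $x_i-z_{s(i)}=(x_i-y_i)+(y_i-z_{s(i)})$, again an orthogonal decomposition into the $E^\bot$-component $x_i-y_i$ and the $E$-component $y_i-z_{s(i)}$, so
\begin{equation*}
  |x_i-z_{s(i)}|^2=|x_i-y_i|^2+|y_i-z_{s(i)}|^2.
\end{equation*}
Summing over $i$, the $|x_i-y_i|^2$ terms cancel from both sides, and the claimed inequality reduces to $\sum_i|y_i-z_i|^2\le\sum_i|y_i-z_{s(i)}|^2$. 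But $(y_i,z_i)\in\supp(\sigma)$ for each $i$, and $\sigma$ is an optimal transference plan between $(p_E)_\#\mu$ and $\nu$, so Theorem~\ref{thm-criterion} applied to $\sigma$ gives exactly this inequality. Hence $\supp(\pi^{13}_\#\Gamma)$ is $c$-cyclically monotone, and Theorem~\ref{thm-criterion} (now applied in the reverse direction) shows $\pi^{13}_\#\Gamma$ is optimal.

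The only genuinely delicate point is the bookkeeping with the intermediate points $y_i$: one must be sure that a point in $\supp(\pi^{13}_\#\Gamma)$ really does lift to $\supp(\Gamma)$ with a compatible $y$, which is where the hypothesis that $\supp((p_E)_\#\mu)$ is compact enters — it makes $\supp(\rho)$ and $\supp(\sigma)$ closed with closed projections, so that $\supp(\pi^{13}_\#\Gamma)=\pi^{13}(\supp(\Gamma))$ and, for $(x_i,z_i)$ in this set, there is $y_i$ with $(x_i,y_i,z_i)\in\supp(\Gamma)$; moreover since $\rho=(Id\times p_E)_\#\mu$ is concentrated on a graph, necessarily $y_i=p_E(x_i)\in E$. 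Once that is secured, the argument is just the orthogonal Pythagoras splitting above together with two applications of Theorem~\ref{thm-criterion}. I would also remark that the same computation shows $\pi^{13}_\#\Gamma$ does not depend on the choice of gluing, and if $\sigma$ itself is induced by a map then so is $\pi^{13}_\#\Gamma$, namely by the composition with $p_E$.
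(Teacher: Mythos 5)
Your proof is correct and follows essentially the same route as the paper: lift each $(x_i,z_i)\in\supp(\pi^{13}_\#\Gamma)$ to a triple $(x_i,y_i,z_i)\in\supp(\Gamma)$ using compactness of $\supp((p_E)_\#\mu)$ (the paper isolates this step as Lemma~\ref{lem-1}, while you argue via closedness of the projection along a compact factor — the same mechanism), note $y_i=p_E(x_i)$ because $\rho$ is concentrated on a graph, split the quadratic cost by orthogonality, and invoke the $c$-cyclical monotonicity criterion of Theorem~\ref{thm-criterion} once for $\sigma$ and once to conclude for $\pi^{13}_\#\Gamma$. The only cosmetic difference is that you cancel the common $|x_i-y_i|^2$ terms rather than adding and re-assembling as the paper does, which is an equivalent bookkeeping.
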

For the proof we shall use the following lemma :
\begin{lem}
  Let $X$ and $Y$ be Polish spaces and let $\rho\in P(X\times Y)$ be a
  tansference plan between two measures $\mu\in P(X)$ and $\nu\in P(Y)$.

  If $\supp(\nu)$ is compact, then for all $x\in\supp(\mu)$ there
  exists $y\in\supp(\nu)$ such that $(x,y)\in\supp(\rho)$.
  \label{lem-1}
\end{lem}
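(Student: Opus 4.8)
The plan is to argue by contradiction, using compactness of $\supp(\nu)$ to upgrade a pointwise statement into a uniform one. Before that, I would record one elementary fact about marginals: since $\pi^2_\#\rho=\nu$, the set $Y\setminus\supp(\nu)$ is open with $\nu$-measure zero, so
$$
\rho\bigl(X\times(Y\setminus\supp(\nu))\bigr)=\nu\bigl(Y\setminus\supp(\nu)\bigr)=0 .
$$
Similarly $\pi^1_\#\rho=\mu$ will be used at the very end.

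Now suppose the conclusion fails: there is $x_0\in\supp(\mu)$ with $(x_0,y)\notin\supp(\rho)$ for every $y\in\supp(\nu)$. First I would fix, for each such $y$, an open product neighbourhood $U_y\times V_y$ of $(x_0,y)$ disjoint from $\supp(\rho)$, hence with $\rho(U_y\times V_y)=0$ (this is where $\supp(\rho)$ closed is used, together with the fact that product sets form a basis of $X\times Y$). Then $\{V_y\}_{y\in\supp(\nu)}$ is an open cover of the compact set $\supp(\nu)$, so I extract a finite subcover $V_{y_1},\dots,V_{y_m}$ and set $U=\bigcap_{i=1}^m U_{y_i}$, an open neighbourhood of $x_0$. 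For each $i$ we have $\rho(U\times V_{y_i})\le\rho(U_{y_i}\times V_{y_i})=0$, and therefore
$$
\rho\bigl(U\times\supp(\nu)\bigr)\le\rho\Bigl(U\times\bigcup_{i=1}^m V_{y_i}\Bigr)=0 .
$$

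Combining this with the marginal fact $\rho(X\times(Y\setminus\supp(\nu)))=0$ gives $\rho(U\times Y)=0$, i.e.\ $\mu(U)=0$; since $U$ is an open neighbourhood of $x_0\in\supp(\mu)$, this is a contradiction, and the lemma follows.

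The only real point in the argument — and the step I would flag as the crux — is the passage from "$\rho$ vanishes on some box around each $(x_0,y)$" to "$\rho$ vanishes on a single set $U\times\supp(\nu)$". This is precisely the finite-subcover step, and it is the sole place the compactness hypothesis on $\supp(\nu)$ enters; everything else is routine bookkeeping with supports and marginals.
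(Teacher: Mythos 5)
Your proof is correct, but it takes a genuinely different route from the paper's. The paper argues directly: since $x\in\supp(\mu)$, every ball $B_x(\varepsilon)$ has positive $\mu$-mass, hence $B_x(\varepsilon)\times Y$ has positive $\rho$-mass, so one can pick points $(x_\varepsilon,y_\varepsilon)\in\supp(\rho)$ with $x_\varepsilon\to x$; sequential compactness of $\supp(\nu)$ (together with the preliminary observation $\supp(\rho)\subset\supp(\mu)\times\supp(\nu)$) lets one extract a subsequence $y_{\varepsilon_k}\to y\in\supp(\nu)$, and closedness of $\supp(\rho)$ gives $(x,y)\in\supp(\rho)$. You instead argue by contradiction via the open-cover characterization of compactness: if no such $y$ exists, product boxes $U_y\times V_y$ missing $\supp(\rho)$ cover $\{x_0\}\times\supp(\nu)$, a finite subcover yields a single open neighbourhood $U$ of $x_0$ with $\rho(U\times\supp(\nu))=0$, and the marginal identity $\rho(X\times(Y\setminus\supp(\nu)))=0$ then forces $\mu(U)=0$, contradicting $x_0\in\supp(\mu)$. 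Both proofs use precisely the same hypotheses (compact $\supp(\nu)$, closed $\supp(\rho)$, the marginal conditions), so neither is more general; the paper's is a touch shorter and exploits metrizability of Polish spaces (sequences suffice), while yours is purely topological and makes the role of compactness very explicit at the finite-subcover step. Either would be acceptable in the paper.
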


\begin{proof}
  First, it is easy to see that $\supp(\rho)\subset\supp(\mu)\times\supp(\nu)$.
  Now, suppose $x\in\supp(\mu)$ ; for any $\varepsilon>0$, we have
  $0<\mu(B_x(\varepsilon))=\rho(B_x(\varepsilon)\times Y)$, and there exists
  $(x_\varepsilon,y_\varepsilon)\in\supp(\rho)\cap(B_x(\varepsilon)\times Y)$.

  In particular, we have $x_\varepsilon\in B_x(\varepsilon)$ and
  $y_\varepsilon\in\supp(\nu)$ which is compact. Therefore, there exists
  $y\in\supp(\nu)$ and a sequence $(x_k,y_k)_{k\in\N}$ of points in $\supp(\rho)$
  tending to $(x,y)$. As $\supp(\rho)$ is closed, we have $(x,y)\in\supp(\rho)$
  which concludes the proof.
\end{proof}

\begin{rem}
  The previous lemma is false without the compactness of $\supp(\nu)$.
\end{rem}

\begin{proof}[proof of theorem \ref{thm-optimal_gluing}]
  Consider $n$ points $(x_1,z_1),\dots,(x_n,z_n)$ in $\supp(\pi_\#^{13}\Gamma)$.
  By lemma \ref{lem-1}, there exists points $y_1,\dots,y_n$ in $\supp((p_E)_\#\mu)$ such that
  $(x_i,y_i,z_i)\in\supp(\Gamma)$ for all $i$. Morever, as $(x_i,y_i)\in\supp(\rho)$, we have
  that $y_i=p_E(x_i)$, $(x_i,p_E(x_i),z_i)\in\supp(\Gamma)$, and $(p_E(x_i),z_i)\in\supp(\sigma)$
  for all $i$.

  Let $s\in\cS_n$. Using Pythagora's formula we have
  $$
  \sum_1^n d^2(x_i,z_i) = \sum_1^n d^2(x_i,p_E(x_i)) + \sum_1^n d^2(p_E(x_i),z_i).
  $$
  As $\sigma$ is an optimal transference plan, using theorem \ref{thm-criterion} we
  get
  $$
  \sum_1^n d^2(x_i,z_i) \le \sum_1^n d^2(x_i,p_E(x_i)) + \sum_1^n d^2(p_E(x_i),z_{s(i)}).
  $$
  Using Pythagora's formula once again we have
  $$
  \sum_1^n d^2(x_i,z_i) \le \sum_1^n d^2(x_i,z_{s(i)}).
  $$
  This implies the optimality of $\pi_\#^{1,3}\Gamma$ by  theorem \ref{thm-criterion}.
\end{proof}

As soon as we are working with the square of the distance in the Euclidean
space, it is not surprising that Pythagora's formula naturally appears,
and it has an other consequence on the geometry of Wasserstein space :
if $E_1$ and $E_2$ are two orthogonal subspaces of $\R^n$, then for any
measure $\mu_1$ and $\mu_2$ supported in $E_1$ and $E_2$ respectivally, all
the transference plan between $\mu_1$ and $\mu_2$ are optimal.
If $\rho$ is a transference plan between between $\mu_1$ and $\mu_2$, then
$\supp(\rho)\subset E_1\times E_2$ and its cost satisfies
\begin{eqnarray*}
	J(\rho) & = & \int_{E_1\times E_2}|x_1-x_2|^2d\rho(x_1,x_2) \\
	 & = & \int_{E_1\times E_2}(|x_1|^2+|x_2|^2)d\rho(x_1,x_2) \\
	 & = & \int_{E_1}|x_1|^2d\mu_1(x_1)+\int_{E_2}|x_2|^2d\mu_2(x_2).
\end{eqnarray*}
Therefore, all the transference plan have the same cost, and they all are
optimal.

For example, let $D\subset\R^2$ be the unit disc and let $\mu$ be the normalized
lebesgue measure on $D$. Consider the two inclusions $i_k:\R^2\to\R^4\simeq\R^2\times\R^2$,
$k=1,2$ defined by $i_1(x)=(x,0)$ and $i_2(x)=(0,x)$, and the two measures
$\mu_1=(i_1)_\#\mu$ and $\mu_2=(i_2)_\#\mu$. For any $t\in\R$ the map
$F_t:\supp(\mu_1)\to\supp(\mu_2)$ defined by $F_t(x,0)=(0,\ex^{it}x)$ push
forward $\mu_1$ on $\mu_2$ and, because of the preceding remark, gives rise
to an optimal transference plan.

Now, using displacement interpolation (cf. for example \cite{Lott-Villani} \S 2), each of these
optimal transference plans gives rise to a geodesic in the Wasserstein space
$P_2(\R^4)$, and we constructed a continuous family of geodesics in $P_2(\R^4)$
with common end points and having the same length.

It is easy to find such a phenomenum in the Wasserstein space of a Riemannian
manifold with positive curvature : considering for example the Dirac masses on the
north and south pole of the sphere, each geodesic between the poles gives rise
to a geodesic in the Wasserstein space (the map $x\mapsto\delta_x$ is an isometric
embedding between the manifold and its Wasserstein space). However, our example
is of different nature as there is a unique geodesic between any two points in $\Rn$.

This situation is of ``positive curvature'' nature : on a Riemanniann manifold,
such a situation implies that the end points are conjugate
points along the geodesics, and therefore implies the presence of positive sectionnal
curvature. Therefore, although the Euclidean space has vanishing curvature, its
Wasserstein has positive curvature in some sense ; this remark has to be compared
with J. Lott's curvature calculations
on the spaces of measures with $C^\infty$ densities with respect to the
Lebesgue measure (cf. \cite{Lott}, corollary 1).

\subsubsection*{The case of measures supported in a submanifold}
In the sequel we want to use solutions to the problem of Monge to compare measures
supported in a submanifold with measures supported in a linear subspace.
By the previous theorem, it is natural to consider the push forward of the first measure
by the orthogonal projection on the linear subspace, and to use a solution
of the problem of Monge in the linear subspace.
A sufficient condition for such a solution to exist, is that the pushed measure
does not give mass to small sets of the linear subspace.

Consider an isometric immersion $i:M^n\to\Rnk$, and let $E$ be a linear subspace
of $\Rnk$. We shall note $P:\Rnk\to E$ the orthogonal projection on $E$, $p=P_{|_M}$
its restriction to $M$, and $\cC=\{x\in M\ |\ T_xp:T_xM\to E \mbox{ is not onto }\}$
the critical set of $p$. In particular, $\cC$ is a closed subset of $M$.

\begin{prop}
  Let $i:M^n\to\R^{n+k}$ be an isometric immersion, let $E$ be a linear subspace
  of $\R^{n+k}$ with $\dim(E)\le n$, and let $p:M\to E$ be the orthogonal projection on $E$.

  For any nonnegative function $f$ on $M$ vanishing on $\cC$, the measure $\mu=fdv_M$
  is such that $p_\#\mu$ is absolutly continuous with respect to the
  Lebesgue measure of $E$.
\end{prop}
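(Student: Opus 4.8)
The plan is to show directly that the push‑forward $p_\#\mu$ sends $\lambda_E$‑null sets to zero, where $\lambda_E$ denotes the Lebesgue (Riemannian) measure of $E$; the only tool needed is the coarea formula applied to the smooth map $p=P_{|_M}\colon M\to E$. Set $m=\dim E\le n$. For $x\in M$ let
\[
J_p(x)=\sqrt{\det\!\bigl(T_xp\circ(T_xp)^*\bigr)}
\]
be the coarea Jacobian of $p$; since $p$ is the restriction of the linear map $P$ to the smooth submanifold $M$, it is smooth, hence $J_p$ is a smooth nonnegative function on $M$, and by definition of the critical set one has $J_p(x)>0$ precisely for $x\in M\setminus\cC$. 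Fix a Borel set $A\subset E$ with $\lambda_E(A)=0$; it suffices to prove that $\mu(p^{-1}(A))=0$.

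Since $f$ vanishes on $\cC$, we have $\mu(p^{-1}(A))=\int_{p^{-1}(A)\setminus\cC}f\,dv_M$. Covering $M\setminus\cC$ by countably many relatively compact open subsets (on each of which $p$ is Lipschitz), we may apply the coarea formula to the nonnegative Borel function $g=\mathbf{1}_{p^{-1}(A)}\,f/J_p$ defined on $M\setminus\cC$, allowing it to take the value $+\infty$ (which is legitimate for the coarea formula with nonnegative integrands). Because $g\,J_p=\mathbf{1}_{p^{-1}(A)}f$ on $M\setminus\cC$, this gives
\[
\int_{p^{-1}(A)\setminus\cC}f\,dv_M
=\int_{M\setminus\cC}g\,J_p\,dv_M
=\int_E\Bigl(\int_{p^{-1}(y)\setminus\cC}g\,d\cH^{n-m}\Bigr)\,d\lambda_E(y).
\]
On each fiber $p^{-1}(y)$ the factor $\mathbf{1}_{p^{-1}(A)}$ is the constant $\mathbf{1}_A(y)$, so the inner integral vanishes whenever $y\notin A$; hence the right–hand side is in fact an integral over $A$, and since $\lambda_E(A)=0$ it equals $0$. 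Therefore $\mu(p^{-1}(A))=0$ for every $\lambda_E$‑null set $A$, i.e. $p_\#\mu\ll\lambda_E$.

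The argument is essentially routine once the coarea formula is invoked; the only points requiring a little care are technical and I expect them to be the main (minor) obstacle: verifying that the coarea formula is applicable on the noncompact manifold $M\setminus\cC$ (handled by the local Lipschitz property of the smooth map $p$ together with $\sigma$‑compactness of $M$), and justifying that the fiber integrals $\int_{p^{-1}(y)\setminus\cC}f/J_p\,d\cH^{n-m}$ may legitimately be $+\infty$ for some $y$ without invalidating the computation. The degeneration of $J_p$ along $\cC$ is harmless precisely because $f$ is assumed to vanish there, which is exactly the role of the hypothesis.
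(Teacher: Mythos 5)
Your proof is correct. You use the coarea formula to show directly that $\lambda_E$-null sets pull back to $\mu$-null sets: covering $M\setminus\cC$ by countably many charts on which $p$ is a proper Lipschitz submersion, writing $\mathbf 1_{p^{-1}(A)}f = g\,J_p$ with $g=\mathbf 1_{p^{-1}(A)}f/J_p\ge 0$, and observing that the fiber integrals vanish off $A$. The points you flag as technicalities (nonnegativity of the integrand permits $+\infty$ values, and $\sigma$-compactness allows passing from local to global via monotone convergence) are indeed the only ones that need checking, and you handle them correctly. The paper's own proof runs the contrapositive: it assumes $p_\#\mu(A)>0$, deduces $\mu(p^{-1}(A)\setminus\cC)>0$, localizes near a point of $\{f>0\}\setminus\cC$ where $p$ is a submersion, and invokes the fact that a submersion maps sets of positive Riemannian measure to sets of positive Lebesgue measure to conclude $\lambda_E(A)>0$. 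The two arguments rest on the same underlying fact --- submersions preserve positivity of measure, which is a corollary of the coarea formula --- but the paper's version is shorter and purely qualitative, whereas your version is quantitative and has the small side benefit of already producing the explicit density formula $F(y)=\sum_{x\in p^{-1}(y)} f(x)/J_p(x)$ that the paper only writes down afterwards (its equation for the density $F$). Either route is acceptable; yours arguably tightens a slightly informal localization step in the paper's argument.
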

\begin{proof}
  Let $A\subset E$ be a Borelian subset such that $p_\#\mu(A)>0$. As $\mu(p^{-1}(A))>0$,
  there exists $x\in p^{-1}(A)$ such that $f(x)>0$, and a neighborhood $U$ of $x$
  such that $p_{|_U}$ is a submersion and $\mu(U\cap p^{-1}(A))>0$. Since
  $p_{|_U}$ is a submersion we have $\lambda(p(U\cap p^{-1}(A)))>0$
  which implies that $\lambda(A)>0$.
\end{proof}

As a consequence, we have the following result on the existence of a solution for
the problem of Monge between $\mu$ and any measure on $E$ :
\begin{cor}
  For any nonnegative function $f$ with compact support on $M$ and vanishing
  on $\cC$, and for any measure
  $\nu$ on $E$, the problem of Monge between the measures $\mu=fdv_M$
  and $\nu$ admits a solution $T:M\to E$.

  Moreover, there exists a convex function $V$ on $\R^{n+k}$ such that
  $T$ is the restriction to $M$ of the gradient of $V$.
\end{cor}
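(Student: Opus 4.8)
The plan is to reduce to Brenier's theorem on $E$ and then to invoke the gluing theorem \ref{thm-optimal_gluing}. Up to normalisation we may assume that $\mu=fdv_M$ and $\nu$ are probability measures (if their total masses differ there is nothing to transport). First I would apply the preceding proposition: since $f$ vanishes on $\cC$, the pushed measure $p_\#\mu$ is absolutely continuous with respect to the Lebesgue measure of $E$, hence it does not charge small sets of $E$. By Brenier's theorem (theorem \ref{thm-BrenieMcCann}, in the one-sided form recalled in the introduction, which only needs the source measure not to charge small sets) there is a convex function $\varphi$ on $E$ whose gradient $W=\nabla\varphi$ pushes $p_\#\mu$ onto $\nu$, and $W$ realises the infimum in the Monge problem between $p_\#\mu$ and $\nu$; in particular the transference plan $\sigma=(Id\times W)_\#(p_\#\mu)$ is optimal between $p_\#\mu$ and $\nu$.

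Next I would combine $\sigma$ with the orthogonal projection. Since $f$ has compact support, $\supp(p_\#\mu)$ is compact (it is contained in the compact set $p(\supp f)$), so theorem \ref{thm-optimal_gluing} applies to $\rho=(Id\times p)_\#\mu$ and $\sigma$: any gluing $\Gamma$ of $\rho$ and $\sigma$ gives an optimal transference plan $\pi^{13}_\#\Gamma$ between $\mu$ and $\nu$. As noted after definition \ref{defi-gluing}, since $\rho$ and $\sigma$ are induced by the maps $p$ and $W$ the gluing is the unique measure $(Id\times p\times W\circ p)_\#\mu$, so $\pi^{13}_\#\Gamma=(Id\times T)_\#\mu$ with $T:=W\circ p:M\to E$. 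Thus $T_\#\mu=\nu$; and since every map pushing $\mu$ onto $\nu$ induces a transference plan of the same cost whereas $(Id\times T)_\#\mu$ is an optimal plan, the map $T$ attains the infimum of the Monge problem between $\mu$ and $\nu$, which is the first assertion.

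For the ``moreover'' I would set $V:=\varphi\circ P:\Rnk\to\R$, which is convex as the composition of the convex function $\varphi$ with the linear map $P$. Since $P$ is the orthogonal projection on $E$ it is self-adjoint and $\nabla\varphi$ takes its values in $E$, so at every point where $\varphi$ is differentiable at $P(x)$ one has $\bnabla V(x)=P(\nabla\varphi(P(x)))=\nabla\varphi(P(x))=W(P(x))$; restricting to $M$ and using $p=P_{|_M}$ gives $\bnabla V_{|_M}=W\circ p=T$ $\mu$-almost everywhere, which is where these maps are defined, and this finishes the proof. The one genuinely delicate point in the argument is the non-degeneracy of $p_\#\mu$ on $E$ needed to apply Brenier's theorem; this is precisely the content of the preceding proposition, and it is there that the hypothesis that $f$ vanishes on $\cC$ enters. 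Everything else is bookkeeping with transference plans and the gluing construction.
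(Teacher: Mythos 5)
Your proof is correct and follows essentially the same route as the paper: reduce to Brenier's theorem on $E$ via the preceding proposition, then invoke Theorem \ref{thm-optimal_gluing} to pull the optimal map $\nabla\varphi$ back to $T=\nabla\varphi\circ p$ on $M$, with $V=\varphi\circ P$ the convex extension. You merely make explicit a few points the paper leaves implicit — the compactness of $\supp(p_\#\mu)$ needed for the gluing theorem, the identification of the glued plan as a map, and the gradient computation in the ``moreover'' — which is fine.
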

\begin{proof}
  Using the proposition above and Brenier's Theorem, the problem of Monge between $p_\#\mu$
  and $\nu$ has a solution $S=\nabla W$ in $E$, where $W$ is a convex function on $E$.

  By theorem \ref{thm-optimal_gluing}, $T=S\circ p=\nabla W\circ p=\nabla(W\circ p)$
  is a solution to the problem of Monge between $\mu$ and $\nu$, and $V=W\circ p$ is
  the desired convex function on $\Rnk$.
\end{proof}

\begin{rem}
 Although the result above looks like Brenier's theorem, there are some differences.
 In particular, even if $\nu$ does not give mass to small sets in $E$, the problem
 of Monge between $\nu$ and $\mu$ could have no solution as the projection $p:M\to E$
 may not be one to one.
\end{rem}

Let us now consider the case where $\dim(E)=\dim(M)=n$, and assume that the measure
$\mu=fdv_M$ has compact support (with $f$ still vanishing on $\cC$).
In the sequel we shall note $J_E(x)$ the absolute value
of the Jacobian determinant of $p$ at $x$. (ie. $J_E(x)=|\det(T_xp)|$,
where the determinant is taken in orthonormal basis of $T_xM$ and $E$).

If $y\in E$ is such that $p^{-1}(y)\cap\supp(\mu)$
is not finite, then, by the compactness of $\supp(\mu)$, $y$ must be a critical value
of $p$. As a consequence of Morse-Sard's theorem (cf. for example
\cite{Hirsch}), we have that $p^{-1}(y)\cap\supp(\mu)$
is finite for almost all $y\in E$ with respect to Lebesgue measure $\lambda$.

Using this fact, we have $p_\#\mu = F\lambda$ where
\begin{equation}
  F(y) = \sum_{x\in p^{-1}(y)\cap\supp(\mu)}\frac{f(x)}{J_E(x)}
  \label{eqn-density}
\end{equation}
is well defined for almost all $y\in E$.

In the sequel we shall need a regularity result for the solution of the problem
of Monge ; it is given by the following proposition :
\begin{prop}
  If $f\in C_c^\infty(M\setminus\cC)$ and $g\in C^\infty(\overline{D})$ where $D$
  is a smooth convex domain in $E$,
  then there exists a smooth convex function $W$ on $E$ such that $\nabla(W\circ p)$
  is a solution to the problem of Monge between $\mu=fdv_M$ and $\nu=g\lambda$.
  \label{prop-regularity}
\end{prop}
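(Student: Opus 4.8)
The plan is to reduce the statement to a regularity question for a Brenier map on $E$ and then to invoke Caffarelli's regularity theory for the Monge--Amp\`ere equation. As in the proof of the previous corollary, if $W$ is a convex function on $E$ whose gradient $\nabla W$ solves the problem of Monge between $p_\#\mu$ and $\nu$, then by theorem \ref{thm-optimal_gluing} the map $\nabla(W\circ p)=\nabla W\circ p$ solves the problem of Monge between $\mu$ and $\nu$; so it suffices to produce such a $W$ which is moreover smooth. Since (as established above) $p_\#\mu$ is absolutely continuous with density $F$ given by equation \ref{eqn-density}, theorem \ref{thm-BrenieMcCann} provides a convex function $W$ on $E$, unique up to an additive constant, with $(\nabla W)_\#(p_\#\mu)=\nu$ (the masses of $\mu$ and $\nu$ being assumed equal), and the whole problem reduces to proving $W\in C^\infty$.

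The first step is to describe the two densities. Since $f\in C_c^\infty(M\setminus\cC)$, the compact set $\supp(f)$ is disjoint from the critical set $\cC$ and therefore lies in an open set on which $p$ is a local diffeomorphism; hence every fibre $p^{-1}(y)\cap\supp(f)$ is finite, and formula \ref{eqn-density} together with the inverse function theorem shows that the density $F$ of $p_\#\mu$ with respect to Lebesgue measure is smooth and compactly supported on $E$, with $\{F>0\}=p(\{f>0\})$ and $F$ locally bounded away from $0$ there. On the target side, $\nu=g\lambda$ with $g\in C^\infty(\overline{D})$ and, discarding if necessary the closed $\nu$-negligible set $\{g=0\}$, we may assume $g$ is bounded between two positive constants on the convex domain $D$.

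The heart of the argument is the regularity of $W$. The Brenier potential $W$ transports the measure with smooth, locally positive density $F$ on the open set $\{F>0\}$ onto the measure with smooth, positive density $g$ on the \emph{convex} domain $D$, and it solves the Monge--Amp\`ere equation $\det D^2W=F/(g\circ\nabla W)$ in the sense of Aleksandrov, with $\nabla W$ sending $\{F>0\}$ into $\overline{D}$. Convexity of the target $D$ is exactly the hypothesis required: by Caffarelli's regularity theory (see e.g. \cite{Villani1}) $W$ is strictly convex and of class $C^{1,\alpha}$ on $\{F>0\}$, and a Schauder bootstrap using the smoothness of $F$ and of $g$ then yields $W\in C^\infty(\{F>0\})$. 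As $\nabla W$ matters only $p_\#\mu$-almost everywhere and $p_\#\mu$ is concentrated on $\{F>0\}=p(\{f>0\})$, this is the regularity that is used in the sequel; should a genuinely smooth convex function on all of $E$ be wanted, one modifies $W$ away from $\supp(p_\#\mu)$, where it is unconstrained apart from convexity. Finally, with $W$ at hand, the assertion that $\nabla(W\circ p)$ solves the problem of Monge between $\mu$ and $\nu$ follows from theorem \ref{thm-optimal_gluing} exactly as in the previous corollary.

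I expect the main obstacle to be precisely this appeal to Caffarelli's theorem: one has to be careful that the source density $F$ degenerates near the boundary of $\{F>0\}$, so that only the interior version of the regularity statement is available and one should check that localising does not destroy the convexity of the target --- which is exactly why the construction uses the orthogonal projection to push $\mu$ forward onto $E$ rather than to pull $\nu$ back to $M$, and why $D$ (a domain in $E$), and not any domain in $M$, is required to be convex. The remaining points --- the smoothness of $F$ and the assembly of the Monge solution by gluing --- are routine.
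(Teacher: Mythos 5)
Your proposal follows the same route as the paper: reduce, via Theorem \ref{thm-optimal_gluing}, to the regularity of the Brenier potential $W$ on $E$, show that the density $F$ of $p_\#\mu$ lies in $C_c^\infty(E)$, and invoke Caffarelli's regularity theory. The paper is more careful than you on the smoothness of $F$: for $y\in\supp(p_\#\mu)$ it picks neighborhoods $U_x$ of the finitely many preimages on which $p$ is a diffeomorphism onto a common ball $B_\varepsilon(y)$, then uses compactness of $\supp(\mu)\setminus\bigcup U_x$ to shrink to a ball $B_\alpha(y)$ whose cylinder $B_\alpha(y)+E^\perp$ avoids that compact set, so that no new branches appear and $F$ is locally a finite sum of smooth terms; you compress this to a bare appeal to the inverse function theorem, which leaves the possible appearance/disappearance of preimage branches unaddressed. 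Conversely you are more explicit than the paper about the exact hypotheses and output of Caffarelli's theorem (interior $C^{1,\alpha}$ regularity plus Schauder bootstrap, the degeneracy of $F$ near $\partial\{F>0\}$, convexity of the target), correctly noting that one really gets smoothness of $W$ where it is needed, namely on $p(\{f>0\})$, and that a globally smooth convex extension is a separate, unconstrained matter.
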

\begin{proof}
  The smoothness of $W$ will be a consequence of Caffarelli's
  regularity theory for solutions of the problem of Monge (cf. \cite{Caffarelli1},
  \cite{Caffarelli2} and \cite{Caffarelli3}). In order
  to use this theory, we just have to prove that the density $F$ of
  $p_\#\mu$ with respect to Lebesgue measure belongs to $C_c^\infty(E)$.

  As $\supp(\mu)$ is compact, so is $\supp(p_\#\mu)$. Let $y\in\supp(p_\#\mu)$,
  $p^{-1}(y)\cap\supp(\mu)$ is finite, and for each $x\in p^{-1}(y)\cap\supp(\mu)$
  there exists a neighborhood $U_x$ of $x$ such that $p:U_x\to p(U_x)$ is a
  diffeomorphism. Moreover we can assume that for all $x$, $p(U_x)=B_\varepsilon(y)$.

  Since $\supp(\mu)\setminus\cup_xU_x$ is compact in $\Rnk$, there exist
  $0<\alpha\le\varepsilon$ such that the cylinder $B_\alpha(y)+E^\bot$ does not
  intersect $\supp(\mu)\setminus\cup_xU_x$. Therefore, on $B_\alpha(y)$, $F$ is
  a sum of smooth functions, and $F$ is smooth on $E$.
\end{proof}

%
%
%
%
\section{Isoperimetric inequalities for submanifolds of the Euclidean space}

In this section we consider an isometric immersion $i:M^n\to\Rnk$,
and a linear subspace $E\subset\Rnk$ of dimension $n$.

For any $n$-plane $F\subset\Rnk$, let $K_E(F)=|\det(q)|$ where
$q:F\to E$ is the orthogonal projection from $F$ to $E$ and $\det(q)$
is taken in orthonormal basis of $F$ and $E$.

In particular, if $p:M\to E$ denote the orthogonal projection on $E$, and
$J_E(x)=|\det(T_xp)|$, we have $J_E(x)=K_E(T_xM)$.

\subsubsection*{A weighted isoperimetric inequality}
\begin{thm}
  Let $i:M^n\to\Rnk$ be an isometric immersion, and let $E$ be a $n$-dimensional
  linear subspace of $\Rnk$. For any regular domain $\Omega\subset M$ we have
  $$
  n\omega_n^\frac{1}{n}\Bigl(\int_\Omega J_E^\frac{1}{n-1}dv_M\Bigr)^\frac{n-1}{n}
   \le \vol(\partial\Omega) + n\int_\Omega|H|dv_M.
  $$

  The Sobolev counterpart of this inequality is
  $$
  n\omega_n^\frac{1}{n}\Bigl(\int_MJ_E^\frac{1}{n-1}|u|^\frac{n}{n-1}dv_M\Bigr)^\frac{n-1}{n}
   \le \int_M|\nabla u|dv_M + n\int_M|H||u|dv_M
  $$
  for any function $u\in C_c^\infty(M)$.

  These inequalities are sharp.
  \label{thm-sobL1}
\end{thm}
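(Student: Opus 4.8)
The strategy is to transplant to the submanifold $M$ the optimal-transport proof of the sharp $L^1$ Sobolev inequality of Cordero-Erausquin--Nazaret--Villani, using the orthogonal projection $p\colon M\to E$ to carry the problem into $E$. By the reductions recalled in Section~1 it suffices to prove the Sobolev statement for a fixed nonnegative $u\in C_c^\infty(M)$; the isoperimetric statement then follows by Federer--Fleming. Put $Z=\int_M J_E^{1/(n-1)}u^{n/(n-1)}\,dv_M$ and $f=\tfrac1Z J_E^{1/(n-1)}u^{n/(n-1)}$, so that $\mu:=f\,dv_M$ is a probability measure that vanishes on the critical set $\cC$ (because $J_E$ does). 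Let $\nu$ be the normalised Lebesgue measure of a round ball $B\subset E$; its radius is then $C:=\omega_n^{-1/n}$ and its density is the constant $\lambda_E(B)^{-1}=1$. By the corollary established in Section~2 one may take a convex $W\colon E\to\R$ with $\nabla W$ valued in $\bar B$, hence $|\bnabla\bV|\le C$ for $\bV:=W\circ P$ (with $P\colon\Rnk\to E$ the linear projection), such that $T=\bnabla\bV_{|_M}=\nabla W\circ p$ solves the Monge problem from $\mu$ to $\nu$; the restriction $V_M=\bV_{|_M}$ then satisfies $|\nabla V_M|\le|\bnabla\bV|\le C$ and Proposition~\ref{prop-cvxe_submfd} applies to $\bV$ (here the exceptional subspace being $E^\perp$).

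The heart of the matter is a pointwise inequality on $M$. Since $P$ is linear, $\bhess_A\bV(x)=P^*\bigl(\hess_A W(p(x))\bigr)P$, so that for an orthonormal basis $(e_i)$ of $T_xM$, writing $A=T_xp$ and $B=\hess_A W(p(x))\ge0$,
\[
\tr\bigl(\bhess_A\bV(x)_{|_{T_xM}}\bigr)=\tr(A^*BA)=\tr(B\,AA^*).
\]
For $x\notin\cC$ the map $A$ is invertible, and the arithmetic--geometric mean inequality applied to the positive operator $B\,AA^*$ gives $\tr\bigl(\bhess_A\bV(x)_{|_{T_xM}}\bigr)\ge n\bigl(\det\hess_A W(p(x))\bigr)^{1/n}J_E(x)^{2/n}$, since $\det(AA^*)=|\det T_xp|^2=J_E(x)^2$. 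On the other hand $p_\#\mu=F\lambda$ with $F(p(x))\ge f(x)/J_E(x)$, and the (almost everywhere valid) Monge--Ampère equation reads $F=\det\hess_A W$ on $B$, the density of $\nu$ there being $1$. Combining these, and noting that the resulting inequality is trivial where $f$ vanishes (in particular on $\cC$, where the right-hand side is $0$), we obtain, almost everywhere on $M\setminus\cC$,
\[
\tr\bigl(\bhess_A\bV_{|_{TM}}\bigr)\ \ge\ n\,f^{1/n}J_E^{1/n}.
\]

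It remains to integrate against $u$. By the remark following Proposition~\ref{prop-cvxe_submfd}, $\Delta_\Dp V_M=\nu_0+h\,dv_M$ with $\nu_0\ge0$ a Radon measure whose absolutely continuous part equals $\tr(\bhess_A\bV_{|_{TM}})$ on $M\setminus\cC$, and with $|\int\varphi h\,dv_M|\le nC\int|\varphi||H|\,dv_M$. Since $u\ge0$ and the absolutely continuous part of $\nu_0$ is nonnegative, integrating the last displayed inequality over $M\setminus\cC$ (its left-hand side vanishing on $\cC$) and using $\int u\,d\nu_0=\int u\,d(\Delta_\Dp V_M)-\int uh\,dv_M$ gives
\[
n\!\int_M u\,f^{1/n}J_E^{1/n}\,dv_M\ \le\ \int_M u\,d(\Delta_\Dp V_M)+nC\!\int_M u|H|\,dv_M ,
\]
while by the definition of the distributional Laplacian and an integration by parts $\int_M u\,d(\Delta_\Dp V_M)=\int_M V_M\,\Delta u\,dv_M=-\int_M\langle\nabla V_M,\nabla u\rangle\,dv_M\le C\int_M|\nabla u|\,dv_M$. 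Since $\int_M u\,f^{1/n}J_E^{1/n}\,dv_M=Z^{-1/n}\int_M J_E^{1/(n-1)}u^{n/(n-1)}\,dv_M=Z^{(n-1)/n}$, we obtain $nZ^{(n-1)/n}\le C\bigl(\int_M|\nabla u|\,dv_M+n\int_M u|H|\,dv_M\bigr)$, which is the asserted Sobolev inequality after dividing by $C=\omega_n^{-1/n}$. Sharpness is immediate: for the identity immersion $M=E$ one has $H\equiv0$, $J_E\equiv1$, $p=\mathrm{id}$, and both inequalities reduce to the classical Euclidean isoperimetric and $L^1$ Sobolev inequalities, with equality for balls (respectively, in the limit, their indicator functions); a geodesic ball of $E$ is a Euclidean ball.

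The delicate point is to make the first two paragraphs rigorous, since the Brenier potential $W$ is a priori only twice differentiable in the sense of Aleksandrov: either one regularises the merely Hölder density $f=\tfrac1Z J_E^{1/(n-1)}u^{n/(n-1)}$ (first treating $u$ supported away from $\cC$, so that $f$ may be taken smooth, positive and compactly supported in $M\setminus\cC$) and invokes Caffarelli's regularity theory through Proposition~\ref{prop-regularity} before passing to the limit; or one argues throughout with McCann's almost-everywhere Monge--Ampère equation, which only requires $p_\#\mu$ and $\nu$ to be absolutely continuous. In either case one must keep track of the critical set $\cC$, which enters both the normalisation of $f$ and the Lebesgue decomposition of $\Delta_\Dp V_M$; the transition in the third paragraph from the distributional Laplacian to $\int\langle\nabla V_M,\nabla u\rangle$ is exactly what Proposition~\ref{prop-cvxe_submfd} is built to provide, so once its hypotheses are checked the remaining work is bookkeeping.
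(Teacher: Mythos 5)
Your proof is correct and follows the same strategy as the paper's: project onto $E$, invoke Brenier's theorem and McCann's a.e.\ Monge--Amp\`ere equation, apply the arithmetic--geometric inequality to $\bhess_A\bV_{|_{TM}}=T_xp^{*}\,\hess_A W\,T_xp$, and integrate via Proposition~\ref{prop-cvxe_submfd} and Remark~\ref{rem-laplacian_cvx_submfd}. The only cosmetic difference is that you scale the target to a ball of radius $\omega_n^{-1/n}$ with unit density (so $\omega_n^{1/n}$ appears at the final division by $C$ rather than at the change-of-variable step), and your invocation of AM--GM is phrased for $B\,AA^{*}$ rather than the symmetric $A^{*}BA$; both are harmless and the argument is the paper's.
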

\begin{proof}
  Let $u\in C_c^\infty(M)$ be a nonnegative function and let
  $f=\frac{J_E^\frac{1}{n-1}u^\frac{n}{n-1}}{c_E(u)}$,
  where $c_E(u)=\int_MJ_E^\frac{1}{n-1}u^\frac{n}{n-1}dv_M$.
  The function $f$ vanishes on $\cC$, therefore,
  the measure $\mu=fdv_M$ is such that $p_\#\mu$
  is absolutly continuous with respect to Lebesgue measure on $E$ with a
  density $F$ given by the formula \ref{eqn-density}.

  Using Brenier's theorem, there exists a convex function $V$
  such that $\nabla V$ is the solution of the problem of Monge in $E$
  between $p_\#\mu$ and $\frac{\chi_{B_E}}{\omega_n}dz$, where $B_E$ is
  the unit ball in $E$. Moreover, by Brenier's theorem, we have that
  $\nabla V(\supp(p_\#\mu))\subset B_E$, so that $|\nabla V|\le 1$
  on $\supp(p_\#\mu)$, and we can assume that $V$ is finite on $E$. In fact,
  if this is not the case, just replace $V$ by
  $$
  W(x) = \sup\Bigl\{a(x)\ \Bigr|\ a \mbox{ affine function},\ |\nabla a|\le 1,\
  	a\le V \mbox{ on } \supp(p_\#\mu) \Bigr\}.
  $$
  This function is convex on $E$ with $|\nabla W|\le 1$, and $W=V$ on
  $\supp(p_\#\mu)$ so that $\nabla W$ push forward $p_\#\mu$ on
  $\frac{\chi_{B_E}}{\omega_n}dz$. In the sequel we shall assume that
  $V$ is finite on the whole of $E$.

  Let $\bV$ denotes the extension of $V$ to $\Rnk$ (that
  is $\bV=V\circ p$), and $V_M$ denotes the restriction of $\bV$ to $M$.
  The singular set of $\bV$ (i.e. the set where $\bV$ is not twice
  differentiable) is the preimage by $p$ of the singular set of $V$, and
  since $p$ is a local diffeomorphism on $\supp(\mu)$, $\bV$ and $V_M$
  are twice differentiable almost everywhere in $\supp(\mu)$.

  Consider now the change of variable $z=\nabla V(y)$ in $E$. As in
  \cite{Cordero-Nazaret-Villani}, using a remark due to McCann, this
  change of variable gives $\omega_nF(y)=|\det(\hess_AV(y))|$,
  and by \ref{eqn-density} we get
  $$
  \omega_n\frac{f(x)}{J_E(x)} \le \omega_nF(p(x)) = |\det(\hess_AV(p(x)))|
  $$
  for almost all $x$ in the support of $\mu$.

  From the definition of the function $\bV$, we have that its Hessian
  is given by
  $\bhess_A\bV(x)(\xi,\eta)=\hess_AV(p(x))(P(\xi),P(\eta))$ for a.a. points
  $x\in\Rnk$ and any vectors $\xi$ and $\eta$, where $P$ is the
  orthogonal projection on $E$. As the orthogonal projection on $E$
  is also the tangent map of $p$, it follows that, for a.a. $x\in\supp(\mu)$,
  $$
  \det(\bhess_A\bV(x)_{|_{T_xM}}) = J_E^2(x)\det(\hess_AV(p(x))),
  $$
  from which we deduce
  $$
  \omega_nJ_E(x)f(x) \le |\det(\bhess_A\bV(x)_{|_{T_xM}})|.
  $$
  As the restriction of a nonnegative matrix is still nonnegative, the
  arithmetic-geometric inequality gives
  \begin{equation}
    n\omega_n^\frac{1}{n}J_E(x)^\frac{1}{n}f(x)^\frac{1}{n} \le
      \tr(\bhess_A\bV(x)_{|_{T_xM}}).
    \label{eqn-ag}
  \end{equation}
  As $f$ vanishes on $\cC$, proposition \ref{prop-cvxe_submfd} and
  remark \ref{rem-laplacian_cvx_submfd} imply that a.e. in $\supp(\mu)$
  \begin{equation}
  n\omega_n^\frac{1}{n}J_E(x)^\frac{1}{n}f(x)^\frac{1}{n} \le
    \Delta_AV_M - n\langle H,\bnabla\bV\rangle,
    \label{eqn-step2}
  \end{equation}
  where $H$ is the mean curvature vector of $M$.

  Multiplication by $u$ of the previous inequality gives
  \begin{equation}
    \frac{n\omega_n^\frac{1}{n}}{c_E(u)^\frac{1}{n}}
      J_E^\frac{1}{n-1}u^\frac{n}{n-1}
      \le u\Delta_AV_M - nu\langle H,\bnabla\bV\rangle.
    \label{eqn-step3}
  \end{equation}

  By proposition \ref{prop-cvxe_submfd} we have that $\Delta_\Dp\Vo = \nu + h$
  with $\nu$ a nonnegative Radon measure. Using remark \ref{rem-laplacian_cvx_submfd}
  and the Lebesgue decomposition $\nu = \nu_{ac} + \nu_s$, we get
  $$
  \int_{M\setminus\cC}u\Delta_AV_M - nu\langle H,\bnabla\bV\rangle
  	= \int_{M\setminus\cC}ud\nu_{ac},
  $$
  and since $\nu$ and $u$ are nonnegative we obtain
  \begin{eqnarray}
  	\int_{M\setminus\cC}\bigl(u\Delta_AV_M - nu\langle H,\bnabla\bV\rangle\bigr)dv_M
  		& \le & \int_{M\setminus\cC}ud\nu \nonumber \\
  	 & \le & \int_Mud\nu \nonumber \\
  	 & \le & \int_Mu\Delta_\Dp V_Mdv_M  \nonumber \\
  	 	& & - \int_Muhdv_M \label{eqn-step5} \\
  	 & \le & -\int_M\langle\nabla u,\nabla V_M\rangle dv_M \nonumber \\
  	 	& & + n\int_Mu|H|dv_M.
  	\label{eqn-step4}
  \end{eqnarray}
  As $|\bV|\le 1$, we also have $|\nabla V_M|\le 1$ on $M$, and, since
  the lefthandside of equation \ref{eqn-step3} vanishes on $\cC$,
  integrating this equation on $M\setminus\cC$ gives the desired Sobolev
  inequality :
  $$
  n\omega_n^\frac{1}{n}\Bigl(\int_MJ_E^\frac{1}{n-1}u^\frac{n}{n-1}dv_M\Bigr)^\frac{n-1}{n}
    \le \int_M|\nabla u|dv_M + n\int_Mu|H|dv_M.
  $$

  The isoperimetric companion of this Sobolev inequality is
  $$
  n\omega_n^\frac{1}{n}\Bigl(\int_\Omega J_E^\frac{1}{n-1}dv_M\Bigr)^\frac{n-1}{n}
   \le \vol(\partial\Omega) + n\int_\Omega|H|dv_M,
  $$
  and this inequality is sharp as we have equality if $M=E$ and
  $\Omega$ is a ball.
\end{proof}

\subsubsection*{The classical isoperimetric inequality}
To get the usual isoperimetric inequality (without any weight), we can
perform an integration on the Grassmannian of $n$-plane in $\Rnk$.

Let $F$ be a $n$-plane in $\Rnk$, and let
$$
\alpha_{n,k} = \frac{1}{\Vol(G_{n,n+k})}\int_{G_{n,n+k}}K_E(F)^\frac{1}{n}dE,
$$
where the integration is taken for the Haar measure of $G_{n,n+k}$.
Using the homogeneity of $G_{n,n+k}$ and the invariance of the Haar measure,
it is easy to see that
$\alpha_{n,k}$ does not depend on the choice of $F$.

\begin{thm}
  Let $i:M^n\to\Rnk$ be an isometric immersion, and let $E$ be a $n$-dimensional
  linear subspace of $\Rnk$. For any regular domain $\Omega\subset M$ we have
  $$
  n\omega_n^\frac{1}{n}\alpha_{n,k}\Vol(\Omega)^\frac{n-1}{n}
   \le \vol(\partial\Omega) + n\int_\Omega|H|dv_M.
  $$

  The Sobolev counterpart of this inequality is
  $$
  n\omega_n^\frac{1}{n}\alpha_{n,k}\Bigl(\int_M|u|^\frac{n}{n-1}dv_M\Bigr)^\frac{n-1}{n}
   \le \int_M|\nabla u|dv_M + n\int_M|H||u|dv_M
  $$
  for any function $u\in C_c^\infty(M)$.
  \label{cor-isop}
\end{thm}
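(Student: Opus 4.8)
The plan is to deduce Theorem~\ref{cor-isop} from the weighted inequality of Theorem~\ref{thm-sobL1} by averaging over all $n$-dimensional subspaces $E\subset\Rnk$. The key point is that Theorem~\ref{thm-sobL1} holds for \emph{every} choice of $E$, with the same function $u$ but with the weight $J_E=K_E(T_xM)$ depending on $E$; integrating the family of inequalities against the normalized Haar measure of $G_{n,n+k}$ and then using convexity to pull the average inside the power $\frac{n-1}{n}$ should produce exactly the constant $\alpha_{n,k}$.

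Concretely, first I would fix a nonnegative $u\in C_c^\infty(M)$ and, for each $E\in G_{n,n+k}$, write down the Sobolev form of Theorem~\ref{thm-sobL1}:
$$
n\omega_n^\frac{1}{n}\Bigl(\int_M J_E^\frac{1}{n-1}u^\frac{n}{n-1}dv_M\Bigr)^\frac{n-1}{n}
   \le \int_M|\nabla u|dv_M + n\int_M|H|u\,dv_M.
$$
The right-hand side does not depend on $E$, so averaging over $G_{n,n+k}$ gives
$$
n\omega_n^\frac{1}{n}\,\frac{1}{\Vol(G_{n,n+k})}\int_{G_{n,n+k}}\Bigl(\int_M J_E^\frac{1}{n-1}u^\frac{n}{n-1}dv_M\Bigr)^\frac{n-1}{n}dE
   \le \int_M|\nabla u|dv_M + n\int_M|H|u\,dv_M.
$$
Since $t\mapsto t^{\frac{n-1}{n}}$ is concave on $[0,\infty)$, Jensen's inequality for the probability measure $\frac{dE}{\Vol(G_{n,n+k})}$ bounds the averaged left-hand side below by
$$
n\omega_n^\frac{1}{n}\Bigl(\frac{1}{\Vol(G_{n,n+k})}\int_{G_{n,n+k}}\int_M J_E^\frac{1}{n-1}u^\frac{n}{n-1}dv_M\,dE\Bigr)^\frac{n-1}{n}.
$$
Now I would swap the two integrations by Fubini (the integrand is nonnegative and $u$ has compact support, so everything is finite), obtaining the inner integral
$$
\int_M u^\frac{n}{n-1}\Bigl(\frac{1}{\Vol(G_{n,n+k})}\int_{G_{n,n+k}}J_E(x)^\frac{1}{n-1}dE\Bigr)dv_M(x).
$$
Here is where the definition of $\alpha_{n,k}$ enters: one must observe that $\frac{1}{\Vol(G_{n,n+k})}\int_{G_{n,n+k}}K_E(F)^\frac{1}{n-1}dE$ is independent of the $n$-plane $F$ — this follows, just as for $\alpha_{n,k}$ itself, from the transitivity of the $O(n+k)$-action on $G_{n,n+k}$ and the invariance of Haar measure, applied now to the exponent $\frac{1}{n-1}$ in place of $\frac{1}{n}$. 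Call this constant $\beta_{n,k}$; then the inner average equals $\beta_{n,k}\int_M u^\frac{n}{n-1}dv_M$, and we arrive at
$$
n\omega_n^\frac{1}{n}\,\beta_{n,k}^\frac{n-1}{n}\Bigl(\int_M u^\frac{n}{n-1}dv_M\Bigr)^\frac{n-1}{n}
   \le \int_M|\nabla u|dv_M + n\int_M|H|u\,dv_M.
$$

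It remains to identify $\beta_{n,k}^\frac{n-1}{n}$ with $\alpha_{n,k}$. I would do this by running the argument with the exponent $\frac{1}{n}$ inside the weighted inequality replaced throughout — that is, the weighted inequality of Theorem~\ref{thm-sobL1} is really more flexible than stated, or alternatively, one rechecks that in the proof of Theorem~\ref{thm-sobL1} the exponent $\frac{1}{n-1}$ on $J_E$ arose from combining the Jacobian factor $J_E$ with the exponent $\frac{n-1}{n}$ on the integral; tracking this bookkeeping shows that the averaged constant one gets is precisely the one named $\alpha_{n,k}$, i.e. $\beta_{n,k}^{\frac{n-1}{n}}=\alpha_{n,k}$. (Equivalently, reparametrize $f=J_E^{1/(n-1)}u^{n/(n-1)}/c_E(u)$ and note the combination that appears under the Haar integral is $K_E(F)^{1/n}$ after the concavity step.) This is the one genuinely fiddly point, and I expect it to be the main obstacle: making sure the exponents match so that the Jensen step lands exactly on $\alpha_{n,k}$ rather than on some other average. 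Once the Sobolev inequality is established, the isoperimetric statement follows by the Federer–Fleming equivalence recalled in Section~1, exactly as at the end of the proof of Theorem~\ref{thm-sobL1} (apply it to approximations of $\chi_\Omega$), giving
$$
n\omega_n^\frac{1}{n}\alpha_{n,k}\Vol(\Omega)^\frac{n-1}{n}\le\vol(\partial\Omega)+n\int_\Omega|H|dv_M.
$$
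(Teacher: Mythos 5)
Your Jensen step goes in the wrong direction, and this is fatal to the argument as written. Since $t\mapsto t^{\frac{n-1}{n}}$ is \emph{concave}, Jensen's inequality for the normalized Haar probability measure on $G_{n,n+k}$ gives
$$
\frac{1}{\Vol(G_{n,n+k})}\int_{G_{n,n+k}}\Bigl(\int_M J_E^{\frac{1}{n-1}}u^{\frac{n}{n-1}}dv_M\Bigr)^{\frac{n-1}{n}}dE
\;\le\;
\Bigl(\frac{1}{\Vol(G_{n,n+k})}\int_{G_{n,n+k}}\int_M J_E^{\frac{1}{n-1}}u^{\frac{n}{n-1}}dv_M\,dE\Bigr)^{\frac{n-1}{n}},
$$
i.e. the $G$-average of the left-hand side of Theorem~\ref{thm-sobL1} is bounded \emph{above}, not below, by the power of the average; averaging the finished inequality therefore yields no control on the quantity you want. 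Independently, the identity $\beta_{n,k}^{\frac{n-1}{n}}=\alpha_{n,k}$ you hope to check is simply false: applying the same concave Jensen to $h(E)=K_E(F)^{1/(n-1)}$ gives $\alpha_{n,k}=\int h^{(n-1)/n}\le(\int h)^{(n-1)/n}=\beta_{n,k}^{\frac{n-1}{n}}$, with strict inequality since $K_E(F)$ is not constant in $E$, so no bookkeeping of exponents can make the two constants agree.

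The paper's proof avoids this entirely by not averaging the finished Theorem~\ref{thm-sobL1}. It re-runs the transport argument with the modified density $f=J_E^a u^{\frac{n}{n-1}}/c_{E,a}(u)$ for a parameter $a>0$, uses $J_E\le 1$ to replace $c_{E,a}(u)$ by $c(u)=\int_M u^{\frac{n}{n-1}}$, and lets $a\to 0$; this yields an inequality whose left-hand side is \emph{linear} in $J_E^{1/n}$, namely $\frac{n\omega_n^{1/n}}{c(u)^{1/n}}\int_M J_E^{1/n}u^{\frac{n}{n-1}}dv_M\le\int_M|\nabla u|\,dv_M+n\int_M u|H|\,dv_M$, so a plain Fubini average over $G_{n,n+k}$ produces $\alpha_{n,k}$ with no convexity step at all. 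Your underlying idea of deducing the result by averaging Theorem~\ref{thm-sobL1} is salvageable, but without Jensen: raise the weighted inequality to the power $\frac{n}{n-1}$ (making the left-hand side linear in $J_E^{1/(n-1)}$ and the right-hand side $E$-independent), average over $G_{n,n+k}$, and take the $\frac{n-1}{n}$-th root. This yields the constant $\beta_{n,k}^{\frac{n-1}{n}}\ge\alpha_{n,k}$, which is in fact a slight improvement over the statement of Theorem~\ref{cor-isop}.
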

\begin{proof}
  Choose $a>0$, and let $f=\frac{J_E^au^\frac{n}{n-1}}{c_{E,a}(u)}$,
  where $c_{E,a}(u)=\int_MJ_E^au^\frac{n}{n-1}$.
  Following the previous proof, equation \ref{eqn-step3} becomes
  $$
  \frac{n\omega_n^\frac{1}{n}}{c(u)^\frac{1}{n}}
    J_E^{\frac{a+1}{n}}u^\frac{n}{n-1}
    \le u\Delta_AV_M - nu\langle H,\bnabla\bV\rangle
  $$
  a.e. in $M\setminus\cC$, where we also used that
  $c_{E,a}(u)\le c(u)=\int_Mu^\frac{n}{n-1}$.
  Integrating on $M\setminus\cC$, using inequality \ref{eqn-step4}
  and letting $a\to 0$ gives
  $$
  \frac{n\omega_n^\frac{1}{n}}{c(u)^\frac{1}{n}}
    \int_MJ_E^\frac{1}{n}u^\frac{n}{n-1}dv_M
    \le \int_M|\nabla u|dv_M + n\int_Mu|H|dv_M.
  $$
  As $J_E(x)=K_E(T_xM)$, integrating on $G_{n,n+k}$ with respect to $E$ we get
  $$
  n\omega_n^\frac{1}{n}\alpha_{n,k}\Bigl(\int_Mu^\frac{n}{n-1}dv_M\Bigr)^\frac{n-1}{n}
    \le \int_M|\nabla u|dv_M + n\int_Mu|H|dv_M.
  $$
  The isoperimetric companion of this Sobolev inequality is
  $$
  n\omega_n^\frac{1}{n}\alpha_{n,k}\Vol(\Omega)^\frac{n-1}{n}
    \le \vol(\partial\Omega) + n\int_\Omega|H|dv_M
  $$
  for any regular domain $\Omega\subset M$.
\end{proof}

The isoperimetric inequality obtained in this theorem is not the expected one,
as $\alpha_{n,k}<1$. However, we have that $\lim_{n\to\infty}\alpha_{n,1}=1$,
so that this inequality is not far from being sharp for hypersurfaces
of high dimension.

To compute the limit, note that
$\alpha_{n,1}=\frac{1}{\vol(S^n)}\int_{S^n}|\langle\eta,\xi\rangle|^\frac{1}{n}dv_{S^n}(\xi)$,
for a given $\eta\in S^n$. Taking normal coordinates on $S^n$ centered at $\eta$ we get
$$
\alpha_{n,1}=\frac{\vol(S^{n-1})}{\vol(S^n)}
	\int_0^\pi|\cos r|^\frac{1}{n}\sin^{n-1}rdr
	= \frac{\int_0^\pi|\cos r|^\frac{1}{n}\sin^{n-1}rdr}{\int_0^\pi\sin^{n-1}rdr}.
$$
Using that $|\cos r|\ge \cos(\frac{\pi}{2}-\frac{1}{n})
\chi_{[0,\frac{\pi}{2}-\frac{1}{n}]\cup[\frac{\pi}{2}+\frac{1}{n},\pi]}$, we have
\begin{eqnarray*}
	\alpha_{n,1} & \ge &
		\frac{\cos^\frac{1}{n}(\frac{\pi}{2}-\frac{1}{n})
		\Bigl(\int_0^\pi\sin^{n-1}rdr
		-\int_{\frac{\pi}{2}-\frac{1}{n}}^{\frac{\pi}{2}+\frac{1}{n}}\sin^{n-1}rdr\Bigr)}%
		{\int_0^\pi\sin^{n-1}rdr} \\
	 & \ge & \frac{\cos^\frac{1}{n}(\frac{\pi}{2}-\frac{1}{n})
		\Bigl(\int_0^\pi\sin^{n-1}rdr - \frac{1}{2n}\Bigr)}{\int_0^\pi\sin^{n-1}rdr}.
\end{eqnarray*}
As Wallis' integral satisfies
$\int_0^\pi\sin^{n-1}rdr\sim_\infty\sqrt{\frac{2\pi}{n-1}}$, this lower bound tends
to $1$ when $n$ tends to infinity.

This show that our result improve the constant of this kind
of isoperimetric inequalities for submanifolds. In fact,
the constants given in \cite{Michael-Simon} and \cite{Hoffman-Spruck} are
of the form $n\omega_n^\frac{1}{n}\beta_n$ with $\beta_n$ tending to
$0$ when the dimension tends to infinity.

Using ideas of L. Simon, P. Topping obtained the inequality
$2\pi\Vol(\Omega)\le (\vol(\partial\Omega)+2\int_\Omega|H|)^2$ for any surfaces in $\R^{2+k}$
(cf. \cite{Topping}, appendix A). A simple calculation proves that this inequality is
better than the one we get by our method.
Note that for minimal surfaces in $\R^3$, A. Ros and A. Stone
obtained the inequality $2\pi\sqrt{2}\Vol(\Omega)\le\vol(\partial\Omega)^2$
(cf. \cite{Choe2} \S 10.1 for a proof).

\subsubsection*{Transference plans ``moving with the point''}
In the preceding section, we do not get the expected isoperimetric
inequality because the Jacobian of the projection on $E$, which is less than or
equal to one, naturally appear. To avoid this problem, the idea would be to use
at each point of $M$ the projection on the tangent space $T_xM$, and hence to
use a family of transportations ``moving with the point''.

To illustrate this point, let us consider the case of hypersurfaces. Let
$i:M^n\to\Rnu$ be an isometric immersion, and let $u\in C_c^\infty(M)$ be
a nonnegative function.

Choose a nondecreasing smooth function $\varphi$ on $\R_+$ such that $\varphi$
vanishes in a neighborhood of $0$, $0\le\varphi\le 1$, and $\varphi(1)=1$.

For each $\xi\in S^n$, we consider the orthogonal projection $p_\xi:M\to\xi^\bot$,
$J_\xi$ the determinant of its Jacobian, and we note
$f_\xi=\frac{\varphi(J_\xi)u^\frac{n}{n-1}}{c_\xi(u)}$,
where $c_\xi(u)=\int_M\varphi(J_\xi)u^\frac{n}{n-1}$.

Considering the optimal transportations $T_\xi:M\to \xi^\bot$ which push forward
the measure $f_\xi dv_M$ on $M$ to the normalized Lebesgue measure of the unit
ball of $\xi^\bot$, we can define the following map
$$
\Phi:\left\{\begin{array}{rcl}
          M\times S^n & \to & \Rnu \\
          (x,\xi) & \mapsto & T_\xi(x)
         \end{array}\right..
$$
Using the Gauss map $g$ of $M$, we define
$$
X:\left\{\begin{array}{rcl}
          M & \to & \Rnu \\
          x & \mapsto & \Phi(x,g(x))
         \end{array}\right..
$$
As $X_x\in g(x)^\bot$ for each $x\in M$, $X$ is just a vector field on $M$,
and the question is~: can we use this vector field as a ``Knothe map'' to prove
some Sobolev inequality on $M$ ?

For each $\xi\in S^n$, the optimal transportation $T_\xi$ is the gradient of a
convex function $\bV_\xi$ which is the extension to $\Rnu$ of a convex function
in $\xi^\bot$. By proposition \ref{prop-regularity}, the function $\bV_\xi$ is
smooth.

In the sequel we shall note $T^x_{(x,\xi)}\Phi:T_xM\to\Rnu$
(resp. $T^\xi_{(x,\xi)}\Phi:\xi^\bot\to\Rnu$) the tangent map to $\Phi$ with respect
to the first (resp. to the second) variable.

As the dérivative of the Gauss map is given by the shape operator,
for a vector $e\in T_xM$ we have, for any $x\in M$,
$$
(e.X)(x) = (e.\bnabla\bV_\xi)_{|_{\xi=g(x)}} - T^\xi_{(x,g(x))}\Phi.S_x(e)
$$
where $S_x$ is the shape operator of $M$ at $x$. And making the sum
over an orthonormal basis of $T_xM$ we get
\begin{equation}
  \div(X)(x) = \tr(\bhess\bV_\xi(x)_{|_{T_xM}})_{|_{\xi=g(x)}}
    - \tr(T^\xi_{(x,g(x))}\Phi\circ S_x).
  \label{eqn-divX}
\end{equation}

From this expression for $\div(X)$ we can deduce the following proposition :
\begin{prop}
  Let $i:M^n\to\Rnu$ be an isometric immersion. For any regular domain
  $\Omega\subset M$ we have
  $$
  n\omega_n^{\frac{1}{n}}\Vol(\Omega)^{1-\frac{1}{n}} \le \vol(\partial\Omega)
    + \int_\Omega|\tr(T^\xi_{(x,g(x))}\Phi\circ S_x)|dv_M(x).
  $$

  The Sobolev counterpart of this inequality is
  $$
  n\omega_n^\frac{1}{n}\Bigl(\int_M|u|^\frac{n}{n-1}dv_M\Bigr)^\frac{n-1}{n}
   \le \int_M|\nabla u|dv_M + \int_M|u|\bigl|\tr(T^\xi_{(x,g(x))}\Phi\circ S_x)\bigr|dv_M(x)
  $$
  for any function $u\in C_c^\infty(M)$.

  These inequalities are sharp.
\end{prop}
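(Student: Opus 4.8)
The plan is to mimic the proof of Theorem~\ref{thm-sobL1}, replacing the single convex function adapted to $E$ by the moving family $(\bV_\xi)_{\xi\in S^n}$ and the vector field $X$, with formula~\ref{eqn-divX} playing the role of formula~\ref{eqn-laplacian_submfd}. By the Federer--Fleming equivalence recalled in Section~1 it is enough to establish the Sobolev statement, and it suffices to do so for a fixed nonnegative $u\in C_c^\infty(M)$; the isoperimetric inequality is then its companion, obtained by approximating $\chi_\Omega$.

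First I would set up the potentials. For each $\xi\in S^n$ the density $f_\xi=\varphi(J_\xi)u^\frac{n}{n-1}/c_\xi(u)$ vanishes in a neighbourhood of the critical set of $p_\xi$ (because $\varphi$ vanishes near $0$), so $(p_\xi)_\#(f_\xi dv_M)$ is absolutely continuous with a density $F_\xi$ that is smooth on $\xi^\bot$; by Proposition~\ref{prop-regularity} the Brenier potential $W_\xi$ transporting $(p_\xi)_\#(f_\xi dv_M)$ to $\chi_{B_\xi}/\omega_n$, $B_\xi$ the unit ball of $\xi^\bot$, is smooth, and as in the proof of Theorem~\ref{thm-sobL1} one may assume $|\bnabla\bV_\xi|\le 1$ on all of $\Rnu$, where $\bV_\xi=W_\xi\circ p_\xi$ and $T_\xi=(\bnabla\bV_\xi)_{|_M}$. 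The McCann change of variables $z=\bnabla W_\xi(y)$ gives $\omega_n F_\xi=\det(\hess W_\xi)$, hence $\omega_n f_\xi(x)/J_\xi(x)\le\det(\hess W_\xi(p_\xi x))$ at every point of $\supp(f_\xi dv_M)$; combining this with $\det(\bhess\bV_\xi(x)_{|_{T_xM}})=J_\xi(x)^2\det(\hess W_\xi(p_\xi x))$, which holds because the tangent map of $p_\xi$ at $x$ is the orthogonal projection onto $\xi^\bot$ restricted to $T_xM$, and applying the arithmetic--geometric mean inequality to the nonnegative symmetric operator $\bhess\bV_\xi(x)_{|_{T_xM}}$, one gets
$$
n\omega_n^\frac{1}{n}\bigl(J_\xi(x)f_\xi(x)\bigr)^\frac{1}{n}\le\tr\bigl(\bhess\bV_\xi(x)_{|_{T_xM}}\bigr)\qquad\text{for all }x\in M.
$$

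Now comes the only genuinely new point: specialising to $\xi=g(x)$. Then $\xi^\bot=T_xM$, the tangent map of $p_{g(x)}$ at $x$ is the identity of $T_xM$, so $J_{g(x)}(x)=1$ and $\varphi(J_{g(x)}(x))=\varphi(1)=1$, whence $f_{g(x)}(x)=u(x)^\frac{n}{n-1}/c_{g(x)}(u)$. Feeding this into the displayed inequality and using formula~\ref{eqn-divX},
$$
n\omega_n^\frac{1}{n}\,\frac{u(x)^\frac{1}{n-1}}{c_{g(x)}(u)^\frac{1}{n}}\ \le\ \div(X)(x)+\tr\bigl(T^\xi_{(x,g(x))}\Phi\circ S_x\bigr)\ \le\ \div(X)(x)+\bigl|\tr\bigl(T^\xi_{(x,g(x))}\Phi\circ S_x\bigr)\bigr|,
$$
the second inequality being trivial. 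To finish, I would multiply by $u(x)\ge 0$, bound $c_{g(x)}(u)=\int_M\varphi(J_{g(x)})u^\frac{n}{n-1}dv_M\le\int_Mu^\frac{n}{n-1}dv_M=:c(u)$ since $0\le\varphi\le 1$, and integrate over $M$; since $X$ is a tangent vector field with $|X(x)|=|T_{g(x)}(x)|\le 1$ and $u\in C_c^\infty(M)$, the divergence theorem gives $\int_Mu\,\div(X)\,dv_M=-\int_M\langle\nabla u,X\rangle dv_M\le\int_M|\nabla u|\,dv_M$, and one arrives at
$$
n\omega_n^\frac{1}{n}\Bigl(\int_Mu^\frac{n}{n-1}dv_M\Bigr)^\frac{n-1}{n}\le\int_M|\nabla u|\,dv_M+\int_Mu\,\bigl|\tr\bigl(T^\xi_{(x,g(x))}\Phi\circ S_x\bigr)\bigr|\,dv_M,
$$
which is the claimed Sobolev inequality. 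Sharpness follows by taking for $M$ a hyperplane $\R^n\subset\Rnu$: then $S_x\equiv 0$, the curvature term vanishes, and equality holds when $\Omega$ is a round ball.

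The main obstacle is not the computation above but the regularity that makes it legitimate: one needs the family $\xi\mapsto\bV_\xi$ (equivalently the map $\Phi$) to depend smoothly enough on the parameter $\xi$ for $\div(X)$ and formula~\ref{eqn-divX} to make sense --- that is, smooth dependence on $\xi$ of the solutions of the underlying Monge--Amp\`ere equations, on top of the interior regularity of each $W_\xi$ supplied by Proposition~\ref{prop-regularity} --- and one must also dispose of the fact (already implicit in Theorem~\ref{thm-sobL1}) that $u^\frac{n}{n-1}$ fails to be smooth at the zeros of $u$, which is handled by a routine density argument. A minor but essential verification is the identity $J_{g(x)}(x)=1$, which is precisely what removes the Jacobian weight here and distinguishes this statement from Theorem~\ref{thm-sobL1}.
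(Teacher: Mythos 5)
Your argument reproduces the paper's own proof step for step: the pointwise AM--GM estimate of Theorem~\ref{thm-sobL1} applied to each smooth potential $\bV_\xi$, specialisation to $\xi=g(x)$ where $J_{g(x)}(x)=1$ and $\varphi(1)=1$, formula~\ref{eqn-divX} to rewrite $\tr(\bhess\bV_\xi(x)_{|_{T_xM}})_{|_{\xi=g(x)}}$ as $\div(X)+\tr(T^\xi_{(x,g(x))}\Phi\circ S_x)$, the bound $c_\xi(u)\le c(u)$, and integration by parts with $|X|\le 1$, ending with equality for a round ball in a hyperplane. You also rightly flag the smooth dependence of $\xi\mapsto\bV_\xi$ as the delicate point underlying $\div(X)$ and formula~\ref{eqn-divX}; the paper leaves this implicit after invoking Proposition~\ref{prop-regularity}, so this is a fair caveat rather than a defect of your proof.
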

\begin{proof}
  Following the proof of theorem \ref{thm-sobL1}, for any $\xi\in S^n$ and any
  $x\in\supp(f_\xi)$, equation \ref{eqn-ag} gives
  $$
  n\omega_n^\frac{1}{n}J_\xi(x)^\frac{1}{n}f_\xi(x)^\frac{1}{n} \le
      \tr(\bhess\bV_\xi(x)_{|_{T_xM}}),
  $$
  with the usual Hessian, $\bV_\xi$ being smooth.
  Using the fact that $J_{g(x)}(x)=1$ and $c_\xi(u)\le\int_Mu^\frac{n}{n-1}=c(u)$
  we get
  \begin{eqnarray*}
    n\omega_n^\frac{1}{n}\frac{u(x)^\frac{1}{n-1}}{c(u)^\frac{1}{n}} & \le &
      \tr(\bhess\bV_\xi(x)_{|_{T_xM}})_{|_{\xi=g(x)}} \\
     & \le & \div(X)(x) + \tr(T^\xi_{(x,g(x))}\Phi\circ S_x).
  \end{eqnarray*}
  Multipying by $u$, integrating by part, and using that $|X_x|\le 1$ for any
  $x\in M$ we obtain
  $$
  n\omega_n^\frac{1}{n}\Bigl(\int_Mu^\frac{n}{n-1}dv_M\Bigr)^\frac{n-1}{n}
   \le \int_M|\nabla u|dv_M + \int_Mu\tr(T^\xi_{(x,g(x))}\Phi\circ S_x)dv_M.
  $$
  The isoperimetric counterpart of this Sobolev inequality is
  $$
  n\omega_n^{\frac{1}{n}}\Vol(\Omega)^{1-\frac{1}{n}} \le \vol(\partial\Omega)
    + \int_M\tr(T^\xi_{(x,g(x))}\Phi\circ S_x)dv_M,
  $$
  and this inequality is sharp as we have equality for any geodesic ball
  lying in any hyperplane of $\Rnu$.
\end{proof}

Note that the result of the previous proposition is not so far from that
of theorem \ref{thm-sobL1}, as the third term involves the shape operator
whose trace is the mean curvature. The remaining problem is to deal with the
derivative of the transports map with respect to the parameter $\xi$.

\subsubsection*{A weighted $L^p$-sobolev inequality}
In \cite{Cordero-Nazaret-Villani} the authors also obtained the sharp
$L^p$ sobolev inequalities on $\R^n$ in a similar way, using a different target
measure (cf. \cite{Cordero-Nazaret-Villani} theorem 2). In our setting,
we get weighted Sobolev inequalities, with weights involving a negative
power of $J_E$. For this weight to be finite almost eveywhere, we shall
assume that the critical set $\cC$ of the projection is negligible in $M$.

\begin{thm}
  Let $i:M^n\to\Rnk$ be an isometric immersion, and let $E$ be a $n$-dimensional
  linear subspace of $\Rnk$ such that the critical set of the orthogonal
  projection from $M$ to $E$ is negligible.

  For any $1<p<n$, and for any function $u\in C_c^\infty(M)$ we have
  \begin{multline*}
  S_{n,p}\Bigl(\int_MJ_E^\frac{1}{n-1}|u|^\frac{np}{n-p}dv_M\Bigr)^\frac{n-p}{np}
   \le \int_MJ_E^{-\frac{p-1}{n-1}}|\nabla u|^pdv_M \\
   + \frac{n(n-p)}{p(n-1)}\int_MJ_E^{-\frac{p-1}{n-1}}|H||u|dv_M
  \end{multline*}
  where $S_{n,p}$ is the $L^p$ Sobolev constant of $\Rn$.
  This inequality is sharp.
  \label{thm-SobLp}
\end{thm}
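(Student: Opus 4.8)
The plan is to follow the scheme of Theorem \ref{thm-sobL1}, but with the target measure taken from \cite{Cordero-Nazaret-Villani} so that the $L^p$ rather than the $L^1$ Sobolev constant of $\Rn$ appears. Fix $1<p<n$ and a nonnegative $u\in C_c^\infty(M)$. As in the $L^1$ case, set $f=\frac{J_E^\frac{1}{n-1}u^\frac{np}{n-p}}{c(u)}$ with $c(u)=\int_MJ_E^\frac{1}{n-1}u^\frac{np}{n-p}dv_M$; since $\cC$ is negligible, $f$ is defined a.e., vanishes on $\cC$, and $\mu=fdv_M$ has $p_\#\mu$ absolutely continuous on $E$ with density $F$ given by \ref{eqn-density}. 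Now, however, transport $p_\#\mu$ not to the uniform measure on the ball but to the measure $g\lambda$ on $E$ where $g=v^\frac{np}{n-p}$ for the appropriate extremal-type function $v\in C_c^\infty(E)$ coming from the dual characterization of $S_{n,p}$ quoted in Section 1; let $\nabla V$ be the Brenier map, $\bV=V\circ p$, and $V_M$ its restriction to $M$. The Monge--Ampère equation for the change of variables $z=\nabla V(y)$ now reads $F(y)=g(\nabla V(y))\det(\hess_AV(y))$, and the identity $\det(\bhess_A\bV(x)_{|_{T_xM}})=J_E^2(x)\det(\hess_AV(p(x)))$ carries over verbatim, giving, for a.a.\ $x\in\supp(\mu)$,
$$
J_E(x)f(x) = g(\nabla V(p(x)))\,J_E(x)^2\det(\hess_AV(p(x))) = g(\bnabla\bV(x))\det(\bhess_A\bV(x)_{|_{T_xM}}).
$$

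The key step is then to replace the arithmetic--geometric inequality \ref{eqn-ag} by the weighted version used in \cite{Cordero-Nazaret-Villani}: writing $A=\bhess_A\bV(x)_{|_{T_xM}}$, one has $n\bigl(\det A\bigr)^\frac{1}{n}\le\tr A$, but to extract an $L^p$ statement one combines this with Hölder so that a term $|\bnabla\bV|^\frac{p}{p-1}$ appears against $\det A$ — precisely the structure that produces the factor $\frac{n(n-p)}{p(n-1)}$ and the $p$-th power of $|\nabla V_M|$. Substituting the Monge--Ampère identity and using $|\bnabla\bV|=|\nabla V|\circ p$ (here one must be careful that $\bnabla\bV$ is the ambient gradient, which lies in $E$, so $|\bnabla\bV(x)|=|\nabla V(p(x))|$), one arrives at a pointwise inequality of the form
$$
\frac{S_{n,p}}{c(u)^\frac{1}{n}}J_E(x)^\frac{1}{n-1}u(x)^\frac{np}{n-p}
 \le J_E(x)^{-\frac{p-1}{n-1}}\Bigl(\text{(divergence term)} + |\nabla V_M|^p\Bigr)\;\text{up to constants},
$$
where the divergence term is $u\,\Delta_AV_M - nu\langle H,\bnabla\bV\rangle$ exactly as in \ref{eqn-step3}. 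The negative power $J_E^{-\frac{p-1}{n-1}}$ is forced by bookkeeping: $J_E f = J_E^\frac{n}{n-1}u^{np/(n-p)}/c(u)$, and distributing the exponent through the Hölder step leaves $J_E^\frac{1}{n-1}$ on the left and $J_E^{-\frac{p-1}{n-1}}$ on the right.

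It remains to integrate over $M\setminus\cC$. The divergence/curvature part is handled exactly as in the chain \ref{eqn-step5}--\ref{eqn-step4}, using Proposition \ref{prop-cvxe_submfd} and Remark \ref{rem-laplacian_cvx_submfd}: the singular part $\nu_s$ of $\Delta_\Dp V_M$ is nonnegative and is discarded, integration by parts moves the Laplacian onto $u$, and the curvature term contributes $\le \frac{n(n-p)}{p(n-1)}\int_M J_E^{-\frac{p-1}{n-1}}|H||u|dv_M$. For the gradient term one applies Hölder in the form $\int J_E^{-\frac{p-1}{n-1}}|\nabla u||\nabla V_M|^{p-1}\le\bigl(\int J_E^{-\frac{p-1}{n-1}}|\nabla u|^p\bigr)^{1/p}\bigl(\int J_E^{-\frac{p-1}{n-1}}|\nabla V_M|^p\bigr)^{(p-1)/p}$ — and here one needs to know $\int_M J_E^{-\frac{p-1}{n-1}}|\nabla V_M|^p\,u^{?}$ is controlled, which is where the precise choice of the target density $g=v^{np/(n-p)}$ and the dual formula for $S_{n,p}$ do their work, the numerator $\int|v|^{p(n-1)/(n-p)}$ of that formula matching $\int J_E^{1/(n-1)}u^{np/(n-p)}$ after the change of variables. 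Collecting the powers of $c(u)$ and rearranging yields the stated inequality. Sharpness follows by taking $M=E$, $H=0$, $J_E\equiv1$, and $u$ an extremal function for $S_{n,p}$, which reduces everything to the equality case in \cite{Cordero-Nazaret-Villani}.

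The main obstacle I expect is the weighted Hölder/arithmetic--geometric step and the attendant exponent bookkeeping: one must route the two different powers $\frac{1}{n-1}$ and $-\frac{p-1}{n-1}$ of $J_E$ through the Monge--Ampère substitution, the $\det^{1/n}\le\frac1n\tr$ inequality, and Hölder with exponent $p$, in such a way that (i) the constant that emerges is exactly $S_{n,p}$ via the Cordero-Erausquin--Nazaret--Villani dual formula, and (ii) the term $\int J_E^{-\frac{p-1}{n-1}}|\nabla V_M|^p$ absorbs cleanly against $\int J_E^{\frac{1}{n-1}}u^{np/(n-p)}$. The low-regularity input (using $\Delta_A V_M$ and the sign of $\nu_s$ in place of a genuine Hessian) is already packaged in Proposition \ref{prop-cvxe_submfd}, so that part is routine once the pointwise inequality is in hand.
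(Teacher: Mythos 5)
Your overall scheme matches the paper's: keep $f=\frac{J_E^{1/(n-1)}u^{np/(n-p)}}{c_E(u)}$, transport $p_\#\mu$ to a target density $G$, use $\det(\bhess_A\bV_{|TM})=J_E^2\det(\hess_A V\circ p)$, apply arithmetic-geometric, integrate against the low-regularity machinery of Proposition \ref{prop-cvxe_submfd}, apply H\"older, push forward, and invoke the Cordero-Erausquin--Nazaret--Villani dual formula for $S_{n,p}$. However, the step you flag as the ``main obstacle'' is exactly where your sketch goes off the rails, and it is not routine bookkeeping but the crux of the argument.

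Two concrete problems. First, a minor one: the Monge--Amp\`ere relation is an inequality, not an equality. From \ref{eqn-density}, $F(y)$ is a \emph{sum} over the fiber $p^{-1}(y)\cap\supp(\mu)$, so one only gets $\frac{f(x)}{J_E(x)}\le F(p(x))=G(\nabla V(p(x)))\det(\hess_AV(p(x)))$, hence $J_E(x)f(x)\le G(\bnabla\bV(x))\det(\bhess_A\bV_{|T_xM})$, not equality.

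Second, and this is the real gap: your H\"older split is misformulated and cannot be closed. After multiplying the arithmetic--geometric inequality $J_E^{1/n}G(\bnabla\bV)^{-1/n}\le\frac1n f^{-1/n}\Delta_AV_M - f^{-1/n}\langle H,\bnabla\bV\rangle$ by $J_E^{-1/n}f$ and integrating, the push-forward identity turns the left side into $\int_E G^{(n-1)/n}$, and after integration by parts the gradient term on the right is proportional to $\int_M u^{n(p-1)/(n-p)}\langle\nabla u,\nabla V_M\rangle$ — with \emph{no} $J_E$ weight and a single power of $|\nabla V_M|$, not $|\nabla V_M|^{p-1}$. The paper's H\"older then distributes \emph{cancelling} $J_E$ weights: one factor carries $J_E^{1/((n-1)q)}$ and the other $J_E^{-1/((n-1)q)}$ (with $q=p/(p-1)$), so that the $q$-factor becomes exactly $\bigl(\int_M J_E^{1/(n-1)}u^{np/(n-p)}|\bnabla\bV|^q\bigr)^{1/q}=\bigl(c_E(u)\int_M f|\bnabla\bV|^q\bigr)^{1/q}$, which pushes forward to $\bigl(c_E(u)\int_E|y|^qG(y)dy\bigr)^{1/q}$ and feeds the denominator of the dual formula for $S_{n,p}$. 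Your version puts the \emph{same} weight $J_E^{-(p-1)/(n-1)}$ on both factors, introduces a spurious $|\nabla V_M|^{p-1}$, and drops the $u^{n(p-1)/(n-p)}$ factor; consequently your second factor $\bigl(\int J_E^{-(p-1)/(n-1)}|\nabla V_M|^p\bigr)^{(p-1)/p}$ is not of the form $\int(\cdot)\,d\mu$ and cannot be pushed forward to an integral over $E$. Likewise your ``pointwise inequality'' with $S_{n,p}$ in front cannot be right: $S_{n,p}$ only emerges after integration, push-forward, and taking a supremum over $G=v^{np/(n-p)}$ with $\|v\|_{np/(n-p)}=1$. As written, the proposal correctly identifies the ingredients but does not actually produce the inequality.
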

\begin{proof}
	Let $f=\frac{J_E^\frac{1}{n-1}u^\frac{np}{n-p}}{c_E(u)}$,
	where $c_E(u)=\int_MJ_E^\frac{1}{n-1}u^\frac{np}{n-p}$.
	As $f$ vanishes on $\cC$, we have $p_\#\mu=F(y)dy$ with $F$ given
	by formula \ref{eqn-density}. We follow the proof of
	theorem \ref{thm-sobL1}, except that
	$\nabla V$ is the solution of the problem of Monge between
	the measures $p_\#\mu$ and $G(z)dz$, where the function
	$G\in C_c^\infty(E)$ will be made precise later.

	Using the change of variable formula between $F(y)dy$ and $G(z)dz$,
	the relation between $f$ and $G$ becomes
	\begin{eqnarray*}
		\frac{f(x)}{J_E(x)} & \le & F(p(x)) \\
		 & \le & G(\nabla V(p(x)))|\det(\hess_AV(p(x)))| \\
    	 & \le & G(\bnabla \bV(x))|\det(\hess_AV(p(x)))|.
	\end{eqnarray*}

	From this point, we follow the steps of the proof of theorem \ref{thm-sobL1} :
	by the arithmetic-geometric inequality and proposition \ref{prop-cvxe_submfd},
	equation \ref{eqn-step2} becomes
	$$
	J_E(x)^\frac{1}{n}G(\nabla \bV(x))^{-\frac{1}{n}} \le
    \frac{1}{n}f(x)^{-\frac{1}{n}}\Delta_AV_M - f(x)^{-\frac{1}{n}}\langle H,\bnabla\bV\rangle.
	$$
	As $\cC$ is negligible, this inequality occurs a.e. in the support of $u$.
  Multiplying both parts by $J_E(x)^{-\frac{1}{n}}f(x)$ and integrating on $M$
  we get
  \begin{multline*}
  \int_MG(\nabla \bV(x))^{-\frac{1}{n}}f(x)dv_M \le
    \frac{1}{nc_E(u)^\frac{n-1}{n}}\int_Mu^\frac{p(n-1)}{n-p}\Delta_A V_M \\
    - \frac{1}{c_E(u)^\frac{n-1}{n}}\int_Mu^\frac{p(n-1)}{n-p}\langle H,\bnabla\bV\rangle.
  \end{multline*}
  As the map $\bnabla\bV:M\to E$ push the measure $\mu=fdv_M$ on $G(y)dy$,
  the left handside of this inequality reads $\int_EG(y)^\frac{n-1}{n}dy$.
  On the right handside, we use proposition \ref{prop-cvxe_submfd} to compare
  $\Delta_AV_M$ and $\Delta_\Dp V_M$, as in equation \ref{eqn-step5} where since $\cC$ is
  negligible, $h=n\langle H,\bnabla\bV \rangle$ a.e.. Then, integration by part gives
  \begin{multline*}
  \int_EG(y)^\frac{n-1}{n}dy \le
    -\frac{p(n-1)}{n(n-p)c_E(u)^\frac{n-1}{n}}
    	\int_Mu^\frac{n(p-1)}{n-p}\langle \nabla u,\nabla V_M\rangle \\
    - \frac{1}{c_E(u)^\frac{n-1}{n}}\int_Mu^\frac{p(n-1)}{n-p}\langle H,\bnabla\bV\rangle.
  \end{multline*}
  If $q=\frac{p}{p-1}$ is the dual exponent to $p$, using H\"older inequality
  and $|\nabla V_M|\le|\bnabla\bV|$ we get
  \begin{multline*}
    \frac{n(n-p)}{p(n-1)}\int_EG(y)^\frac{n-1}{n}dy \le \\
      \frac{1}{c_E(u)^\frac{n-1}{n}}
      \Bigr(\int_MJ_E^\frac{1}{n-1}u^\frac{np}{n-p}|\bnabla\bV|^q\Bigl)^{\frac{1}{q}}
      \Bigr(\int_MJ_E^{-\frac{p-1}{n-1}}|\nabla u|^p\Bigl)^\frac{1}{p} \\
      + \frac{n(n-p)}{p(n-1)c_E(u)^\frac{n-1}{n}}
      \Bigr(\int_MJ_E^\frac{1}{n-1}u^\frac{np}{n-p}|\bnabla\bV|^q\Bigl)^{\frac{1}{q}}
      \Bigr(\int_MJ_E^{-\frac{p-1}{n-1}}|u|^p|H|^p\Bigl)^\frac{1}{p},
  \end{multline*}
  which gives
  \begin{multline*}
    \frac{n(n-p)}{p(n-1)}\int_EG(y)^\frac{n-1}{n}dy \le
      \frac{1}{c_E(u)^\frac{n-p}{np}}
      \Bigr(\int_Mf|\bnabla\bV|^q\Bigl)^{\frac{1}{q}}
      \Bigr(\int_MJ_E^{-\frac{p-1}{n-1}}|\nabla u|^p\Bigl)^\frac{1}{p} \\
      + \frac{n(n-p)}{p(n-1)c_E(u)^\frac{n-p}{np}}
      \Bigr(\int_Mf|\bnabla\bV|^q\Bigl)^{\frac{1}{q}}
      \Bigr(\int_MJ_E^{-\frac{p-1}{n-1}}|u|^p|H|^p\Bigl)^\frac{1}{p}.
  \end{multline*}
  Using once again that $\bnabla\bV$ push $\mu$ on $G(z)dz$ we obtain
  \begin{multline*}
    \frac{n(n-p)}{p(n-1)}
    \frac{\int_EG(y)^\frac{n-1}{n}dy}{\Bigr(\int_E|y|^qG(y)dy\Bigl)^\frac{1}{q}} \le
      \frac{1}{c_E(u)^\frac{n-p}{np}}
      \Bigr(\int_MJ_E^{-\frac{p-1}{n-1}}|\nabla u|^p\Bigl)^\frac{1}{p} \\
      + \frac{n(n-p)}{p(n-1)c_E(u)^\frac{n-p}{np}}
      \Bigr(\int_MJ_E^{-\frac{p-1}{n-1}}|u|^p|H|^p\Bigl)^\frac{1}{p}.
  \end{multline*}
  Taking $G=v^\frac{np}{n-p}$ where $\|v\|_\frac{np}{n-p}=1$, the supremum over
  all function $v$ of the left handside is the Sobolev constant of $\Rn$ (cf.
  the caracterisation of $S_{n,p}$ given in section 1). Thus we have
  \begin{multline*}
    S_{n,p}\Bigl(\int_MJ_E^\frac{1}{n-1}|u|^\frac{np}{n-p}dv_M\Bigr)^\frac{n-p}{np}
      \le \int_MJ_E^{-\frac{p-1}{n-1}}|\nabla u|^pdv_M \\
     + \frac{n(n-p)}{p(n-1)}\int_MJ_E^{-\frac{p-1}{n-1}}|H||u|dv_M.
  \end{multline*}
  Moreover, this inequality is sharp because it is just the Euclidean $L^p$
  Sobolev inequality of $\Rn$ when $M=E$.
\end{proof}

%
%
%
%
\section{Inequalities for submanifolds in warped products}

In the previous section the main tools where the projection on
a subspace (seen as the Euclidean $n$ space) and the use of optimal
transport in this subspace. As
soon as we have these tools on a manifold, we can expect Sobolev
inequalities for its submanifolds.

A typical example is the hyperbolic space, where horospheres are
isometric to the Euclidean space and where the projections on them
are well defined. In fact, the Hyperbolic space is a particular case
of warped product for which we can use optimal transportation to get
weighted Sobolev inequalities on their submanifolds.

\subsubsection*{Warped products}
Consider a warped product $N=\R\times\Rnk$ (with $k\ge 0$)
endowed with the metric $g_N=dt^2+w(t)^2dy^2$ where $w$ is a smooth
function, and $dy^2$ is the Euclidean metric on $\Rnk$. In the sequel
we shall note $(t,y)$ a point in $N$ where $y=(y_1,\dots,y_{n+k})\in\Rnk$.

Let $E$ be a $n$-linear subspace of $\Rnk$ ; we can assume, without loss
of generality, that $E$ is the subspace spanned by the first $n$ vectors
of the canonical basis of $\Rnk$. We denote by $p:N\to E$ the projection
on $E$ : $p(t,y)=(y_1,\dots,y_n)$. In the sequel we assume that $E$ is
endowed with the Euclidean metric, and we have that, if $\xi\in T_{(t,y)}N$
belongs to the subspace spanned by
$(\frac{\partial}{\partial y_1},\dots,\frac{\partial}{\partial y_n})$
then $|T_{(t,y)}p.\xi|=\frac{1}{w(t)}|\xi|$.

Let $V:E\to\R$ be a function on $E$, and let $\bV$ be its extension to $N$
defined by $\bV(t,y)=V(y_1,\dots,y_n)$. By a standard
computation we have $w(t)|\bnabla\bV(t,y)|=|\nabla V(p(t,y))|$ and
\begin{equation}
  \bhess\bV = -2\frac{w'}{w}\sum_{i=1}^{n}\dd{V}{y_i} dy_idt
   + \sum_{i,j=1}^{n}\frac{\partial^2V}{\partial y_i\partial y_j}dy_idy_j.
   \label{eqn-hessian}
\end{equation}
The main difference with the Euclidean case, is that, with the terms coming from
the Hessian of $V$, we get extra terms coming from the extrinsic curvature of
$\{t\}\times\Rnk$ in $N$.

Consider now an isometric immersion $i:M^n\to N$, where $M$ is an
$n$-dimensional manifold, and let $\tau:M\to\R$ be the restriction to
$M$ of the first coordinate function on $N$.

For $x\in M$, let $J_E(x)=|\det(q)|$, where $q$ is the
orthogonal projection (in $T_xN$) from $T_xM$ to the subspace
spanned by $(\frac{\partial}{\partial y_1},\dots,\frac{\partial}{\partial y_n})$.
If we still note $p:M\to E$ the restriction of the projection $p$ to
the submanifold $M$, for each $x\in M$ the absolute value of the Jacobian
determinant of $p$ at $x$ is $\frac{1}{w(\tau(x))^n}J_E(x)$. The critical set
of $p$ is $\cC=\{x\in M\ |\ J_E(x)=0\}$.

Considering a convex function $V$ on $E$, we have that the symetric two form
$$
\cB=\bhess\bV + 2\frac{w'}{w}\sum_{i=1}^{n}\dd{V}{y_i} dy_idt
$$
is nonnegative, and for any $x\in T_xM$ we have
\begin{equation}
	\tr(\cB_{|_{T_xM}}) = \tr(\bhess\bV_{|_{T_xM}})
		+ 2\frac{w'}{w}\langle\nabla\tau,\nabla V_M\rangle.
	\label{eqn-trace_B}
\end{equation}

In the sequel we will use a nonsmooth convex function $V$ on $E$. Using its
second derivatives (well defined almost everywhere) and equation \ref{eqn-hessian},
we define the Hessian of $\bV$ in the sense of Aleksandrov :
\begin{equation}
   \bhess_A\bV = -2\frac{w'}{w}\sum_{i=1}^{n}\dd{V}{y_i} dy_idt
   + \sum_{i,j=1}^{n}\frac{\partial^2V}{\partial y_i\partial y_j}dy_idy_j.
   \label{eqn-hessian_aleksandrov}
\end{equation}
Moreover we can mimic the proof of proposition \ref{prop-cvxe_submfd},
the main point for doing this being that the Riemannian measure of $N$ is
a product measure which can be written using the measure on $M$ and
the Jacobian determinant of $p$. Let $V_M$
be the restriction of $\bV$ to $M$, using Riesz theorem together with
equations \ref{eqn-laplacian_submfd} and \ref{eqn-trace_B} we have that
$\Delta_\Dp V_M -n\langle H,\bnabla\bV\rangle + 2\frac{w'}{w}\langle\nabla\tau,\nabla V_M\rangle$
is a nonnegative Radon measure $\nu$. Therefore, $\Delta_\Dp V_M$ is also
a Radon measure and, if we note $\Delta_AV_M$ the density of its absolutely
continuous part with respect to $dv_M$, we have
$$
\Delta_AV_M = \tr(\bhess_A\bV(x)_{|_{T_xM}}) + n\langle H,\bnabla\bV \rangle
$$
on any domain $D\subset M$ on which $p$ is a local diffeomorphism.

The other consequence of the nonnegativity of $\nu$ is that, mimicking
the arguments leading to inequality \ref{eqn-step4}, we have
\begin{multline}
  \int_{M\setminus\cC}\varphi\Delta_AV_M - n\varphi\langle H,\bnabla\bV\rangle
    +2\varphi\frac{w'}{w}\langle\nabla\tau,\nabla V_M\rangle  
    \le -\int_M\langle\nabla \varphi,\nabla V_M\rangle dv_M \\
    + n\int_M\varphi\frac{|H|}{w} + 2\varphi\frac{w'}{w}\langle\nabla\tau,\nabla V_M\rangle
  \label{eqn-int_part_warp}
\end{multline}
for any nonnegative $\varphi\in C_c^\infty(M)$.

\begin{rem}
  When taking $w(t)=\ex^t$, the manifold $N$ is isometric to the hyperbolic
  space $\Hnk$. In this case, the first coordinate function is a Buseman function
  centered at some point at infinity, the submanifolds $\{t\}\times\Rnk$ are
  horospheres, and the metric $g_N=dt^2+\ex^{2t}dy^2$ is the hyperbolic metric
  read in horospherical coordinates.
\end{rem}

\subsubsection*{Weighted isoperimetric inequality}
Using the notations above we get the following result :
\begin{thm}
  Let $i:M^n\to\R\times\Rnk$ be an isometric immersion where $\R\times\Rnk$ is endowed
  with the metric $dt^2+w(t)^2dy^2$, and let $E$ be a $n$-dimensional
  linear subspace of $\Rnk$. For any regular domain $\Omega\subset M$ we have
  $$
  n\omega_n^\frac{1}{n}\Bigl(\int_\Omega \bigl(w(\tau)^nJ_E\bigr)^\frac{1}{n-1}dv_M\Bigr)^\frac{n-1}{n}
   \le \int_{\partial\Omega}w(\tau)dv_{\partial\Omega} + n\int_\Omega w(\tau)|H|dv_M.
  $$

  The Sobolev counterpart of this inequality is
  \begin{multline*}
  n\omega_n^\frac{1}{n}\Bigl(\int_M\bigl(w(\tau)^nJ_E\bigr)^\frac{1}{n-1}
    |u|^\frac{n}{n-1}dv_M\Bigr)^\frac{n-1}{n}
    \le \int_M w(\tau)|\nabla u|dv_M \\ + n\int_M w(\tau)|H||u|dv_M
  \end{multline*}
  for any function $u\in C_c^\infty(M)$.\label{thm-warpisop}
\end{thm}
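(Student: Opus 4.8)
The plan is to follow the proof of Theorem \ref{thm-sobL1} almost verbatim, the only genuinely new point being the correct bookkeeping of the warping factor $w(\tau)$, and then to pass from the Sobolev statement to the isoperimetric one via the (weighted) Federer--Fleming equivalence recalled in Section~1, applying the Sobolev inequality to a sequence approximating $\chi_\Omega$ so that, by the coarea formula, $\int_M w(\tau)|\nabla u|\,dv_M\to\int_{\partial\Omega}w(\tau)\,dv_{\partial\Omega}$. So let $u\in C_c^\infty(M)$ be nonnegative, put $f=\frac{(w(\tau)^nJ_E)^{1/(n-1)}u^{n/(n-1)}}{c(u)}$ with $c(u)=\int_M(w(\tau)^nJ_E)^{1/(n-1)}u^{n/(n-1)}\,dv_M$, and set $\mu=f\,dv_M$. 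Since $f$ vanishes on $\cC$ and has compact support and the Jacobian of $p:M\to E$ is $w(\tau)^{-n}J_E$, the measure $p_\#\mu=F\lambda$ is absolutely continuous on $E$ with $F(p(x))\ge w(\tau(x))^nf(x)/J_E(x)$ for a.a. $x\in M\setminus\cC$ (as in formula \ref{eqn-density}). By Brenier's theorem there is a convex $V$ on $E$ with $\nabla V$ pushing $p_\#\mu$ onto $\omega_n^{-1}\chi_{B_E}\lambda$; after the truncation used in Theorem \ref{thm-sobL1} we may assume $|\nabla V|\le1$ on $E$ and $V$ finite, and we set $\bV(t,y)=V(y_1,\dots,y_n)$.

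Next I would transport McCann's Monge--Amp\`ere relation $\omega_nF(y)=|\det\hess_AV(y)|$ (as in the proof of Theorem \ref{thm-sobL1}) to $M$. Here the warped case differs from the Euclidean one: the naturally nonnegative symmetric form is not $\bhess_A\bV$ but $\cB=\sum_{i,j=1}^n\frac{\partial^2V}{\partial y_i\partial y_j}\,dy_i\,dy_j$, which by \ref{eqn-hessian_aleksandrov} equals $\bhess_A\bV+2\frac{w'}{w}\sum_i\dd{V}{y_i}\,dy_i\,dt$. Since $\cB(\xi,\eta)=\hess_AV(p(x))(T_xp\,\xi,T_xp\,\eta)$, the Jacobian identity gives $\det(\cB_{|_{T_xM}})=(w(\tau(x))^{-n}J_E(x))^2\det\hess_AV(p(x))$ for a.a. $x\in M\setminus\cC$; combined with McCann's relation and $F(p(x))\ge w(\tau)^nf/J_E$, this yields $\omega_n\,w(\tau(x))^{-n}J_E(x)f(x)\le|\det(\cB_{|_{T_xM}})|$, and the arithmetic--geometric inequality applied to the nonnegative matrix $\cB_{|_{T_xM}}$ gives
\[ n\omega_n^{1/n}\bigl(w(\tau(x))^{-n}J_E(x)f(x)\bigr)^{1/n}\le\tr(\cB_{|_{T_xM}}) \]
a.e. on $M\setminus\cC$. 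By \ref{eqn-trace_B} and the Aleksandrov--Laplacian identity $\Delta_AV_M=\tr(\bhess_A\bV_{|_{T_xM}})+n\langle H,\bnabla\bV\rangle$ (valid where $p$ is a local diffeomorphism), the right-hand side equals $\Delta_AV_M-n\langle H,\bnabla\bV\rangle+2\frac{w'}{w}\langle\nabla\tau,\nabla V_M\rangle$.

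Then I would multiply this pointwise inequality by the test function $\varphi=w(\tau)^2u\in C_c^\infty(M)$, integrate over $M\setminus\cC$ (the left side vanishing on $\cC$), and apply \ref{eqn-int_part_warp}. Since $\nabla\varphi=w(\tau)^2\nabla u+2w(\tau)w'(\tau)u\,\nabla\tau$, the term $-\int_M\langle\nabla\varphi,\nabla V_M\rangle$ and the term $2\int_M\varphi\frac{w'}{w}\langle\nabla\tau,\nabla V_M\rangle$ occurring in \ref{eqn-int_part_warp} add up to exactly $-\int_M w(\tau)^2\langle\nabla u,\nabla V_M\rangle$ — the $\langle\nabla\tau,\nabla V_M\rangle$ contributions cancel — while $n\int_M\varphi\frac{|H|}{w}=n\int_M w(\tau)u|H|$; bounding $|\nabla V_M|\le|\bnabla\bV|\le1/w(\tau)$ then gives
\[ n\omega_n^{1/n}\int_M w(\tau)^2u\bigl(w(\tau)^{-n}J_Ef\bigr)^{1/n}\,dv_M\le\int_M w(\tau)|\nabla u|\,dv_M+n\int_M w(\tau)|H|u\,dv_M. \]
A check of exponents (with $\beta=\tfrac1{n-1}$ and $\gamma=\tfrac n{n-1}$ one has $\tfrac{1+\beta}{n}=\beta$ and $1+\tfrac\gamma n=\gamma$) shows that the left-hand integral equals $c(u)^{(n-1)/n}=\bigl(\int_M(w(\tau)^nJ_E)^{1/(n-1)}u^{n/(n-1)}\,dv_M\bigr)^{(n-1)/n}$, which is the announced Sobolev inequality; the isoperimetric inequality follows.

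I expect the only real difficulty to lie in this bookkeeping: recognising $\cB$ rather than $\bhess_A\bV$ as the form to feed into the arithmetic--geometric inequality, observing that the choice $\varphi=w(\tau)^2u$ is precisely what makes the $\langle\nabla\tau,\nabla V_M\rangle$ terms cancel against those already built into \ref{eqn-int_part_warp}, and keeping the powers of $w(\tau)$ and $J_E$ consistent through the Jacobian of $p$, the Monge--Amp\`ere relation, and the final integration. No new hard analysis is needed: the required measure-theoretic facts (the analogue of Proposition \ref{prop-cvxe_submfd} for functions constant along the slices $\{t\}\times\Rnk$, together with inequality \ref{eqn-int_part_warp}) have already been established in this section.
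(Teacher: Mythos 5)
Your proof is correct and follows essentially the same route as the paper: you identify the nonnegative form $\cB$ (which the paper also denotes $\cB$ and defines via equation \ref{eqn-hessian_aleksandrov}), derive the pointwise arithmetic–geometric inequality $n\omega_n^{1/n}(w(\tau)^{-n}J_E f)^{1/n}\le\tr(\cB_{|_{TM}})$, multiply by the test function $\varphi=w(\tau)^2u$, and apply inequality \ref{eqn-int_part_warp} so that the $\langle\nabla\tau,\nabla V_M\rangle$ terms cancel, finishing with $w(\tau)|\nabla V_M|\le w(\tau)|\bnabla\bV|\le 1$ and the exponent check $\beta+1=n\beta$. This is exactly the paper's argument, with the same choice of $f$ and the same observation that the form to feed into the AM–GM inequality is $\cB$, not $\bhess_A\bV$.
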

\begin{proof}
  Let $f=\frac{(w(\tau)^nJ_E)^\frac{1}{n-1}u^\frac{n}{n-1}}{c_E(u)}$,
  where $u\in C_c^\infty(M)$ is a nonnegative function, and
  $c_E(u)=\int_M(w(\tau)^nJ_E)^\frac{1}{n-1}u^\frac{n}{n-1}dv_M$.

  Following the proof of theorem \ref{thm-sobL1}, there exist a convex function
  $V$ on $E$ such that $\nabla V$ is the solution of the problem of Monge
  between $p_\#\mu$ and $\frac{\chi_{B_E}}{\omega_n}dz$. 
  
  As $f$ vanishes on the critical set $\cC$, the measure $p_\#\mu$ is absolutely
  continuous with respect to th Lebesgue measure on $E$ and its density reads
  $$
  F(y) = \sum_{x\in p^{-1}(y)\cap\supp(\mu)}\frac{w(\tau(x))^nf(x)}{J_E(x)}.
  $$
  
  Also $V$ may not be smooth, we can use derivatives in the sense of Aleksandrov
  and, by a change of variable in $E$, we get
  \begin{equation}
    \omega_n\frac{w(\tau(x))^nf(x)}{J_E(x)} \le \omega_nF(p(x)) = \det(\hess_AV(p(x)))
    \label{eqn-trans}
  \end{equation}
  for a.a. $x\in\supp(\mu)$.
  Let $\cB=\bhess_A\bV + 2\frac{w'}{w}\sum_{i=1}^{n}\dd{V}{y_i} dy_idt$ ;
  for any unitary vector $\xi\in T_xM$, using
  equation \ref{eqn-hessian_aleksandrov}, we have
  $\cB(\xi,\xi) = \hess_AV(p(x))(T_xp.\xi,T_xp.\xi)$. Therefore we get
  $$
  \frac{J_E(x)^2}{w(\tau(x))^{2n}}\det(\hess_AV(p(x))) = \det(\cB_{|_{T_xM}}).
  $$

  It follows from equation \ref{eqn-hessian_aleksandrov} that, as $V$ is convex,
  $\cB_{|_{T_xM}}$ is nonnegative and, with the geometric-arithmetic
  inequality, the inequality \ref{eqn-trans} becomes
  \begin{equation}
  	n\omega_n^\frac{1}{n}\frac{J_E^\frac{1}{n}}{w(\tau)}f^\frac{1}{n} \le \tr(\cB_{|_{T_xM}}),
    \label{eqn-J_Ef}
  \end{equation}
  from which we have
  \begin{equation}
    n\omega_n^\frac{1}{n}\frac{J_E^\frac{1}{n}}{w(\tau)}f^\frac{1}{n} \le
      \Delta_AV_M - n\langle H,\bnabla\bV\rangle
  	  + 2\frac{w'(\tau)}{w(\tau)}\langle \nabla\tau,\nabla V_M\rangle,
    \label{eqn-G_and_f}
  \end{equation}
  and multiplying by $u(x)w(\tau(x))^2$ gives
  \begin{multline*}
  \frac{n\omega_n^\frac{1}{n}}{c_E(u)^\frac{1}{n}}
    (w(\tau)^nJ_E)^\frac{1}{n-1}u^\frac{n}{n-1} \le
    uw(\tau)^2\Delta_AV_M - nuw(\tau)^2\langle H,\bnabla\bV\rangle \\
    + 2uw(\tau)w'(\tau)\langle \nabla \tau,\nabla V_M \rangle.
  \end{multline*}
  Integrating this inequality on $M\setminus\cC$ and using
  inequality \ref{eqn-int_part_warp} we get
  \begin{multline*}
  \frac{n\omega_n^\frac{1}{n}}{c_E(u)^\frac{1}{n}}
   \int_M(w(\tau)^nJ_E)^\frac{1}{n-1}u^\frac{n}{n-1}dv_M
     \le -\int_Mw(\tau)^2\langle \nabla u,\nabla V_M\rangle \\
     + n\int_Muw(\tau)|H|,
  \end{multline*}
  and since $w(\tau)|\nabla V_M|\le w(\tau)|\bnabla\bV|\le 1$ we obtain
  the desired inequality :
  $$
  \frac{n\omega_n^\frac{1}{n}}{c_E(u)^\frac{1}{n}}
   \int_M(w(\tau)^nJ_E)^\frac{1}{n-1}u^\frac{n}{n-1}dv_M
     \le \int_Mw(\tau)|\nabla u| + n\int_Muw(\tau)|H|.
  $$
\end{proof}

\subsubsection*{Weighted $L^p$ Sobolev inequalities}
As for the Euclidean submanifolds, we can also prove weighted $L^p$ Sobolev inequalities.
\begin{thm}
  Let $i:M^n\to\R\times\Rnk$ be an isometric immersion where $\R\times\Rnk$ is endowed
  with the metric $dt^2+w(t)^2dy^2$, and let $E$ be a $n$-dimensional
  linear subspace of $\Rnk$ such that the critical set of the projection
  on $E$ is negligible in $M$. For any $1< p <n$, and for any function $u\in C_c^\infty(M)$ we have
  \begin{multline*}
  S_{n,p}\Bigl(\int_M\bigl(w(\tau)^nJ_E\bigr)^\frac{1}{n-1}|u|^\frac{np}{n-p}dv_M\Bigr)^\frac{n-p}{np}
   \le \int_MJ_E^{-\frac{p-1}{n-1}}w(\tau)^\frac{n-p}{n-1}|\nabla u|^pdv_M \\
   + \frac{n(n-p)}{p(n-1)}\int_MJ_E^{-\frac{p-1}{n-1}}w(\tau)^\frac{n-p}{n-1}|H|^p|u|^pdv_M.
  \end{multline*}
\end{thm}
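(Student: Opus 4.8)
The plan is to run the proof of Theorem~\ref{thm-SobLp} with the Euclidean ingredients replaced by the warped-product ones assembled just before Theorem~\ref{thm-warpisop}: the Aleksandrov Hessian $\bhess_A\bV$ of \ref{eqn-hessian_aleksandrov}, the auxiliary nonnegative symmetric form $\cB=\bhess_A\bV+2\frac{w'}{w}\sum_i\dd{V}{y_i}\,dy_idt$ together with the trace identity \ref{eqn-trace_B}, and the nonnegativity of the Radon measure $\nu=\Delta_\Dp V_M-n\langle H,\bnabla\bV\rangle+2\frac{w'}{w}\langle\nabla\tau,\nabla V_M\rangle$ (which also provides the warped analogue of Proposition~\ref{prop-cvxe_submfd}).

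For a nonnegative $u\in C_c^\infty(M)$ I would set $f=\frac{(w(\tau)^nJ_E)^\frac{1}{n-1}u^\frac{np}{n-p}}{c_E(u)}$ with $c_E(u)=\int_M(w(\tau)^nJ_E)^\frac{1}{n-1}u^\frac{np}{n-p}dv_M$ and $\mu=fdv_M$; since $\cC$ is negligible, $f$ is defined a.e., and as in the proof of Theorem~\ref{thm-warpisop} one has $p_\#\mu=F(y)dy$ with $F(y)=\sum_{x\in p^{-1}(y)\cap\supp(\mu)}\frac{w(\tau(x))^nf(x)}{J_E(x)}$. Let $\nabla V$ be the Brenier solution of the Monge problem between $p_\#\mu$ and $G(z)dz$ for a $G\in C_c^\infty(E)$ to be fixed later, with $V$ convex on $E$, $\bV(t,y)=V(y_1,\dots,y_n)$ and $V_M=\bV_{|_M}$. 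The Monge--Amp\`ere equation in the Aleksandrov sense gives $\frac{w(\tau)^nf}{J_E}\le F(p(x))=G(\nabla V(p(x)))\det(\hess_AV(p(x)))$ a.e.\ on $\supp(\mu)$; since $\cB_{|_{T_xM}}$ is nonnegative and $\det(\cB_{|_{T_xM}})=\frac{J_E^2}{w(\tau)^{2n}}\det(\hess_AV(p(x)))$ by \ref{eqn-hessian_aleksandrov}, the arithmetic--geometric inequality together with \ref{eqn-trace_B} turns this, a.e.\ on $M$, into
\begin{equation*}
\frac{J_E^{1/n}}{w(\tau)}\,f^{1/n}\,G(\nabla V(p(x)))^{-1/n}\ \le\ \frac1n\bigl(\Delta_AV_M-n\langle H,\bnabla\bV\rangle+2\tfrac{w'}{w}\langle\nabla\tau,\nabla V_M\rangle\bigr).
\end{equation*}

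Next I would multiply by $w(\tau)J_E^{-1/n}f^\frac{n-1}{n}=\frac{w(\tau)^2u^{p(n-1)/(n-p)}}{c_E(u)^{(n-1)/n}}$, integrate over $M$, and use that $\nabla V\circ p$ pushes $\mu$ onto $G(z)dz$, so that the left-hand side becomes $\int_EG^\frac{n-1}{n}dz$. On the right-hand side I would invoke the nonnegativity of $\nu$ and integrate by parts with $\varphi=w(\tau)^2u^{p(n-1)/(n-p)}$, keeping the term $-n\langle H,\bnabla\bV\rangle$ (as in the proof of Theorem~\ref{thm-SobLp}, since now $|\bnabla\bV|$ is no longer bounded by $1/w(\tau)$); exactly as in the proof of Theorem~\ref{thm-warpisop} the two $\langle\nabla\tau,\nabla V_M\rangle$ contributions --- one from $\nabla\varphi$, one from $\nu$ --- cancel, leaving $-\frac{p(n-1)}{n-p}\int_Mw(\tau)^2u^{n(p-1)/(n-p)}\langle\nabla u,\nabla V_M\rangle$ and $-n\int_Mw(\tau)^2u^{p(n-1)/(n-p)}\langle H,\bnabla\bV\rangle$. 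Replacing the scalar products by $|\nabla u|\,|\nabla V_M|$ and $|H|\,|\bnabla\bV|$, I would apply H\"older's inequality with exponents $p$ and $q=\frac{p}{p-1}$, distributing the powers of $w(\tau)$ so that in both terms the mass factor is $\bigl(\int_M(w(\tau)^nJ_E)^\frac{1}{n-1}u^\frac{np}{n-p}(w(\tau)|\bnabla\bV|)^qdv_M\bigr)^{1/q}$, which by $w(\tau)|\bnabla\bV|=|\nabla V(p(x))|$, by $|\nabla V_M|\le|\bnabla\bV|$ and by the pushforward equals $c_E(u)^{1/q}\bigl(\int_E|z|^qG(z)dz\bigr)^{1/q}$, the remaining factor carrying exactly the weight $J_E^{-\frac{p-1}{n-1}}w(\tau)^\frac{n-p}{n-1}$. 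Using $\frac1q-\frac{n-1}{n}=-\frac{n-p}{np}$, then taking $G=v^\frac{np}{n-p}$ with $\|v\|_\frac{np}{n-p}=1$ and passing to the supremum over $v$, the characterisation of $S_{n,p}$ recalled in Section~1 yields the announced inequality (the final simplification of the right-hand side being the one performed in Theorem~\ref{thm-SobLp}).

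The main obstacle is the bookkeeping of the powers of $w(\tau)$: one has to check that, after the arithmetic--geometric step and the multiplication by $w(\tau)^2$, the H\"older splitting distributes the $w(\tau)$-weight as $(w(\tau)|\bnabla\bV|)^q=|\nabla V(p)|^q$ on the mass factor and precisely $w(\tau)^\frac{n-p}{n-1}$ on the gradient and curvature factors --- this balance is exactly what forces the weights $(w(\tau)^nJ_E)^\frac{1}{n-1}$ on the left and $J_E^{-\frac{p-1}{n-1}}w(\tau)^\frac{n-p}{n-1}$ on the right. A secondary point is to make sure the warped version of Proposition~\ref{prop-cvxe_submfd} (only sketched in the text) really applies to the possibly non-smooth $V$ produced by Brenier's theorem; since the transport problem still lives on the Euclidean subspace $E$ and the Riemannian measure of $N$ is a product measure adapted to $p$, this is a routine adaptation of the Euclidean case.
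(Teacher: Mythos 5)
Your proposal is correct and is exactly the route the paper intends: the published argument is only a sketch (``follow step by step the proof of theorem \ref{thm-SobLp}, using the tools of the proof of theorem \ref{thm-warpisop}''), and you have carried out precisely that plan, including the two genuinely new points in the warped setting, namely multiplying the arithmetic--geometric inequality by $w(\tau)J_E^{-1/n}f^{(n-1)/n}=w(\tau)^2u^{p(n-1)/(n-p)}/c_E(u)^{(n-1)/n}$ so that the $\langle\nabla\tau,\nabla V_M\rangle$ contributions from $\nabla\varphi$ and from $\nu$ cancel as in Theorem \ref{thm-warpisop}, and splitting the $w(\tau)$-weight in H\"older via $w(\tau)|\bnabla\bV|=|\nabla V\circ p|$. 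The exponent bookkeeping you worry about does check out ($p/q=p-1$ gives the exponent of $J_E$, and $p-n(p-1)/(n-1)=(n-p)/(n-1)$ gives the exponent of $w(\tau)$ on the right-hand side), so this is a faithful filling-in of the paper's sketch.
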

\begin{proof}[Sketch of proof]
  Let us start with a function $u\in C_c^\infty(M)$ and the measure $\mu=fdv_m$ where
  $f=\frac{(w(\tau)^nJ_E)^\frac{1}{n-1}u^\frac{np}{n-p}}{c_E(u)}$, with
  $c_E(u)=\int_M(w(\tau)^nJ_E)^\frac{1}{n-1}u^\frac{np}{n-p}$.
  
  Then we just have to follow step by step the proof of theorem \ref{thm-SobLp}, using
  the tools of the proof of theorem \ref{thm-warpisop} to handle the different terms
  coming from the metric of $N$.
\end{proof}

%
%
%
%


\begin{thebibliography}{12345678}
%
\small
%
  \bibitem[Br]{Brenier} \textsc{Y. Brenier -- }
    Polar factorization and monotone rearrangement of vector-valued functions,
    \textit{Comm. Pure App. Math.},
    \textbf{44} (1991), 375-417.
%
  \bibitem[C1]{Caffarelli1} \textsc{L.A. Caffarelli -- }
    Boundary regularity of maps with convex potentials,
    \textit{Comm. Pure Appl. Math.},
    \textbf{45} (1992), 1141-1151.
%
  \bibitem[C2]{Caffarelli2} \textsc{L.A. Caffarelli -- }
    The regularity of mappings with a convex potential,
    \textit{ J. Amer. Math. Soc.},
    \textbf{5} (1992), 99-104.
%
  \bibitem[C3]{Caffarelli3} \textsc{L.A. Caffarelli -- }
    Boundary regularity of maps with convex potentials. II,
    \textit{Ann. Math.},
    \textbf{144} (1996), 453-496.
%
  \bibitem[Ca]{Castillon} \textsc{P. Castillon -- }
    Sur l'op\'erateur de stabilit\'e des sous-vari\'et\'es \`a courbure moyenne
    constante dans l'espace hyperbolique,
    \textit{Manuscripta Math.}
    \textbf{94} (1997), 385-400.
%
  \bibitem[Cha]{Chavel} \textsc{I. Chavel -- }
  	\textit{Isoperimetric inequalities},
    Cambridge Tracts in Mathematics 145. Cambridge Univerity Press, Cambridge, 2001.

%
  \bibitem[Ch1]{Choe1} \textsc{J. Choe -- }
    The isoperimetric inequality for minimal surfaces in a Riemannian manifold,
    \textit{J. Reine Angew. Math.}
    \textbf{506} (1999), 205-214.
%
  \bibitem[Ch2]{Choe2} \textsc{J. Choe -- }
    Isoperimetric inequalities of minimal submanifolds,
    \textit{Global theory of minimal surfaces}, 325--369, Clay Math. Proc., 2,
    Amer. Math. Soc., Providence, RI, 2005.
%
  \bibitem[Ch-Gu1]{Choe-Gulliver1} \textsc{J. Choe, R. Gulliver -- }
    Isoperimetric inequalities on minimal submanifolds of space forms,
    \textit{Manuscripta Math.}
    \textbf{77} (1992), 169-189.
%
  \bibitem[Ch-Gu2]{Choe-Gulliver2} \textsc{J. Choe, R. Gulliver -- }
    The sharp isoperimetric inequality for minimal surfaces with radially
    connected boundary in hyperbolic space,
    \textit{Invent. Math.}
    \textbf{109} (1992), 495-503.
%
  \bibitem[C-N-V]{Cordero-Nazaret-Villani} \textsc{D. Cordero-Erausquin,
        B. Nazaret, C. Villani -- }
    A mass transportation approach to optimal Sobolev and
        Gagliardo-Nirenberg inequalities,
    \textit{Adv. Math.}
    \textbf{182} (2004), 307-332.
%
  \bibitem[Ev-Ga]{Evans-Gariepy} \textsc{L.C. Evans, R.F. Gariepy -- }
    \textit{Measure theory and fine properties of functions},
    CRC Press, Boca Raton, FL, 1992.
%
  \bibitem[Fi-Ge]{Figalli-Ge} \textsc{A. Figalli, Y. Ge -- }
  	Isoperimetric-type inequalities on constant curvature manifolds,
    \textit{Preprint}.
%
  \bibitem[Hi]{Hirsch} \textsc{M.W. Hirsch -- }
    \textit{Differential topology},
    Graduate text in Mathematics 33. Springer-Verlag, New York, 1994.
%
  \bibitem[Ho-Sp]{Hoffman-Spruck} \textsc{D. Hoffman, J. Spruck -- }
    Sobolev and isoperimetric inequalities for Riemannian
        submanifolds,
    \textit{Comm. Pure and Appl. Math.}
    \textbf{27} (1974), 715-727.
%
  \bibitem[Lo]{Lott} \textsc{J. Lott -- }
    Some geometric calculations on Wasserstein space,
    \textit{Comm. Math. Phys.}
    \textbf{277} (2008), 423--437.
%
  \bibitem[Lo-Vi]{Lott-Villani} \textsc{J. Lott, C. Villani -- }
    Ricci cuvature for metric-measure spaces via optimal transport,
    \textit{Ann. Math.}, to appear.
%
  \bibitem[Mi-Si]{Michael-Simon} \textsc{J.H. Michael, L.M. Simon -- }
    Sobolev and mean-value inequalities on generalized submanifolds
        of $\R^n$,
    \textit{Comm. Pure and Appl. Math.}
    \textbf{26} (1973), 361-379.
%
  \bibitem[Os]{Osserman} \textsc{R. Osserman -- }
    The isoperimetric inequality,
    \textit{Bull. Amer. Math. Soc.}
    \textbf{84} (1978), 1182-1238.
%
  \bibitem[Pr]{Pratelli} \textsc{A. Pratelli -- }
    On the sufficiency of c-cyclical monotonicity for optimality of
      transport plans,
    \textit{Math. Z.}
    \textbf{258} (2008), 677-690.
%
  \bibitem[Ro]{Ros} \textsc{A. Ros -- }
    The isoperimetric problem
    \textit{Global theory of minimal surfaces},
    175--209, Clay Math. Proc., 2, Amer. Math. Soc., Providence, RI, 2005.
%
  \bibitem[To]{Topping} \textsc{P.M. Topping -- }
    Relating diameter and mean curvature for submanifolds of Euclidean space,
    \textit{Comment. Math. Helv.}
    \textbf{83} (2008), 539-546.
%
  \bibitem[Vi1]{Villani1} \textsc{C. Villani -- }
    \textit{Topics in optimal transportation},
    Graduate studies in mathematics, vol. 58. American Mathematical Society, Providence (2003)
%
  \bibitem[Vi2]{Villani2} \textsc{C. Villani -- }
    \textit{Optimal transport : old and new},
    to appear.
\end{thebibliography}
\end{document}